\documentclass{amsart}
\usepackage{amssymb}
\usepackage{graphicx}
\usepackage{amscd}
\usepackage{comment}
\usepackage[all]{xy}
\usepackage{color}
\newtheorem{thm}{Theorem}[section]
\newtheorem{cor}[thm]{Corollary}
\newtheorem{lem}[thm]{Lemma}
\newtheorem{prop}[thm]{Proposition}

\theoremstyle{definition}
\newtheorem{defn}[thm]{Definition}

\theoremstyle{remark}
\newtheorem{rem}[thm]{Remark}
\newtheorem{ex}[thm]{Example}
\numberwithin{equation}{section}

\newcommand{\norm}[1]{\left\Vert#1\right\Vert}

\DeclareMathOperator{\im}{im}
\newcommand{\ol}{\overline}

\newcommand{\SFH}{\mathit{SFH}}

\newcommand{\HFLh}{\widehat{\mathit{HFL}}}
\newcommand{\HFKh}{\widehat{\mathit{HFK}}}
\newcommand{\HFKm}{\mathit{HFK}^-}

\newcommand{\EH}{\mathit{EH}}

\newcommand{\CFh}{\widehat{\mathit{CF}}}

\renewcommand{\hat}[1]{\widehat{#1}}
\def\a{\alpha}
\def\b{\beta}
\def\g{\gamma}
\def\d{\delta}

\def\S{\Sigma}

\def\t{\mathfrak{t}}

\def\bolda{\boldsymbol{\alpha}}
\def\boldb{\boldsymbol{\beta}}

\def\W{\mathcal{W}}

\def\X{\mathcal{X}}
\def\Int{\text{Int}}

\def\Z{\mathbb{Z}}
\def\Q{\mathbb{Q}}
\def\R{\mathbb{R}}

\def\T{\mathbb{T}}
\def\E{\mathcal{E}}

\def\FF{\mathbb{F}}
\def\CC{\mathcal{C}}

\def\NN{\mathbb{N}}

\def\L{\widehat{\mathfrak{L}}}
\def\Th{\widehat{\mathfrak{T}}}
\def\Lm{\mathfrak{L}^-}

\def\x{\mathbf{x}}
\def\y{\mathbf{y}}
\def\spinc{\text{Spin}^c}

\def\Id{\text{Id}}
\def\tb{\text{tb}}
\def\std{\text{std}}
\def\can{\text{can}}

\def\cL{\mathcal{L}}

\begin{document}

\title[Functoriality of the $\EH$ class under Lagrangian concordances]{Functoriality of the $\EH$ class
and the LOSS invariant under Lagrangian concordances}%

\author{Marco Golla}%
\address{Laboratoire de Math\'ematiques Jean Leray, 2, rue de la Houssini\`ere,
BP 92208, F-44322 Nantes Cedex 3, France}
\email{marco.golla@univ-nantes.fr}

\author{Andr\'as Juh\'asz}%
\address{Mathematical Institute, University of Oxford, Andrew Wiles Building,
Radcliffe Observatory Quarter, Woodstock Road, Oxford, OX2 6GG, UK}%
\email{juhasza@maths.ox.ac.uk}%

\thanks{Research supported by a Royal Society Research Fellowship. This project has received funding from the European Research Council (ERC) under the European Union's Horizon 2020 research and innovation programme (grant agreement No 674978).}

\subjclass[2010]{57M27; 57R17; 57R58}%
\keywords{Contact structure; Legendrian knot; Lagrangian cobordism}

\begin{abstract}
We show that the $\EH$ class and the LOSS invariant of Legendrian knots in contact 3-manifolds
are functorial under regular Lagrangian concordances in Weinstein cobordisms.
This gives computable obstructions to the existence of regular Lagrangian concordances.
\end{abstract}

\maketitle
\section{Introduction}

Link Floer homology, defined by Ozsv\'ath and Szab\'o~\cite{OSz2} and denoted $\HFLh$,
is an invariant of links well-defined up to isomorphism.
In the case of knots, it is also known as knot Floer homology, and denoted $\HFKh$.
It becomes natural for \emph{decorated} links according to
the work of Dylan Thurston, Ian Zemke, and the second author~\cite{naturality}. Furthermore, the second author
showed~\cite{cob} that \emph{decorated} link cobordisms induce functorial cobordism maps on $\HFLh$.
As exhibited by Sarkar~\cite{basepoint} and Zemke~\cite{quasistab},
moving the decorations around the link often induces a non-trivial monodromy of link Floer homology.

Let $\Lambda$ be a Legendrian knot in a contact 3-manifold $(Y, \xi)$.
Then $\Lambda$ admits a Legendrian tubular neighborhood $N(\Lambda)$; i.e.,
a regular neighborhood identified with $S^1 \times \R^2$ such that $\Lambda = S^1 \times \{0\}$, and
\[
\xi|_{N(\Lambda)} = \ker(\cos \theta \,dx - \sin \theta \, dy),
\]
where $\theta$ is the angular coordinate on $S^1$, and $(x,y)$ are the Euclidean coordinates on $\R^2$;
see \cite[Example~2.5.10]{Geiges}.
If $M_\Lambda = Y \setminus N(\Lambda)$, then $\partial M_\Lambda$ is a convex surface with dividing set~$\g_\Lambda$
that consists of two curves, each corresponding to the framing $\tb(\Lambda)$.
We call $(M_\Lambda, \g_\Lambda)$ the sutured manifold \emph{complementary} to $\Lambda$.
We denote by $Y_\Lambda = Y_{\tb(\Lambda)}(\Lambda)$ the result of Dehn filling $M_\Lambda$ with slope $\g_\Lambda$,
and write $\Lambda'$ for the core of the Dehn filling. There is a pair of points $P_\Lambda \subset \Lambda'$
such that $P_\Lambda \times S^1 \subset \Lambda' \times D^2$ is glued to~$\g_\Lambda$.
Furthermore, $R_\pm(P)$ is the component of $\Lambda' \setminus P$ for which
$R_\pm(P) \times S^1$ is glued to~$R_\pm(\g_\Lambda)$. Then $(Y_\Lambda, \Lambda', P_\Lambda)$
is a decorated link in the sense of \cite[Definition~4.4]{cob}; see Section~\ref{sec:SFH}.

Stipsicz and V\'ertesi~\cite{Vertesi} defined the invariant
\[
\EH(\Lambda) := \EH(\xi|_{M_\Lambda}) \in \SFH(-M_\Lambda, -\g_\Lambda) \cong \HFKh(-Y_\Lambda, \Lambda', P_\Lambda)
\]
of Legendrian knots. Note that $\EH(\Lambda)$ is non-vanishing whenever $\EH(\xi) \neq 0$,
since the Honda--Kazez--Mati\'c gluing map~\cite{TQFT} for the sutured submanifold $(M_\Lambda, \g_\Lambda)$
of $(Y, \xi)$ takes $\EH(\Lambda)$ to $\EH(\xi)$.

\begin{defn} \label{def:XL}
  Suppose that $(X, \omega)$ is a Liouville cobordism from $(Y_-, \xi_-)$ to $(Y_+,\xi_+)$.
  Let $\Lambda_\pm$ be a Legendrian knot in $(Y_\pm, \xi_\pm)$, and let
  $L$ be a Lagrangian concordance in $(X, \omega)$ from $\Lambda_-$ to $\Lambda_+$.
  Then $X_L$ is the result of gluing $L \times D^2$ to $X \setminus N(L)$
  with framing $\tb(\Lambda_-) = \tb(\Lambda_+)$, and we write $L'$ for $L \times \{0\}$.
\end{defn}

Note that $(X_L, L')$ is a concordance from $(Y_{\Lambda_-}, \Lambda_-')$
to $(Y_{\Lambda_+}, \Lambda_+')$.
If $(X, \omega)$ is Weinstein, then Eliashberg, Ganatra, and Lazarev~\cite[Definition~2.1]{flexible}
say that the Lagrangian cobordism $L$ is \emph{regular} if the Liouville vector field can be
chosen to be tangent to $L$. For example, Conway, Etnyre, and Tosun~\cite[Lemma~3.4]{fillings} proved
that a Lagrangian cobordism in the symplectization $\left(\R \times Y, d(e^t \a)\right)$
of a contact manifold $(Y, \xi)$ with contact form $\a$ is regular if it is decomposable;
see Section~\ref{sec:background}. (Note that we had also observed this lemma, but decided not to
include it in this paper once \cite{fillings} appeared.)
We are now ready to state our first main result.

\begin{thm} \label{thm:main}
Suppose that $L$ is a regular Lagrangian concordance from $\Lambda_-$ to $\Lambda_+$
in the Weinstein cobordism $(X, \omega)$ from $(Y_-, \xi_-)$ to $(Y_+, \xi_+)$.
Choose an arbitrary decoration~$\sigma'$ on $L' \subset X_L$ consisting of two arcs, one of which
connects $R_-(P_{\Lambda_-})$ and $R_-(P_{\Lambda_+})$,
and the other $R_+(P_{\Lambda_-})$ and $R_+(P_{\Lambda_+})$.
We write $\ol{\cL'}$ for the reverse of the decorated concordance $\cL' = (X_L, L', \sigma')$
from $(-Y_{\Lambda_+}, \Lambda_+', P_{\Lambda_+})$ to $(-Y_{\Lambda_-}, \Lambda_-', P_{\Lambda_-})$.
Then the knot cobordism map
\[
F_{\ol{\cL'}} \colon \HFKh(-Y_{\Lambda_+}, \Lambda_+', P_{\Lambda_+}) \to \HFKh(-Y_{\Lambda_-}, \Lambda_-', P_{\Lambda_-})
\]
takes $\EH(\Lambda_+)$ to $\EH(\Lambda_-)$.
\end{thm}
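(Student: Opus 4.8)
The plan is to reinterpret the knot cobordism map $F_{\ol{\cL'}}$ as a sutured cobordism map of the complement of the concordance, and then to show that this map is governed by the contact geometry of that complement. Removing a neighborhood of $L'\subset X_L$ recovers $X\setminus N(L)$, which is a cobordism from the complementary sutured manifold $(M_{\Lambda_-},\g_{\Lambda_-})$ to $(M_{\Lambda_+},\g_{\Lambda_+})$; under the identifications $\SFH(-M_{\Lambda_\pm},-\g_{\Lambda_\pm})\cong\HFKh(-Y_{\Lambda_\pm},\Lambda_\pm',P_{\Lambda_\pm})$, the map $F_{\ol{\cL'}}$ should be identified with the sutured cobordism map of this complement. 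Granting that identification, it suffices to prove that the latter map carries $\EH(\Lambda_+)$ to $\EH(\Lambda_-)$.

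First I would use regularity to put a Weinstein structure on the complement. Since $L$ is regular, the Liouville vector field can be taken tangent to $L$, so I would choose the tubular neighborhood $N(L)$ of Definition~\ref{def:XL} to be invariant under its flow. Then $W:=X\setminus N(L)$ inherits the Liouville form together with a gradient-like vector field, and is a Weinstein cobordism from $(M_{\Lambda_-},\g_{\Lambda_-})$ to $(M_{\Lambda_+},\g_{\Lambda_+})$ whose induced contact structure restricts to $\xi_\pm|_{M_{\Lambda_\pm}}$ at the two ends. The dividing set on each convex boundary component is the one determined by the framing $\tb(\Lambda_\pm)$, hence equals $\g_{\Lambda_\pm}$, so that the $\EH$ class is defined at each end and agrees with $\EH(\Lambda_\pm)$.

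Next I would decompose $W$ into Weinstein handles of index at most two. Each handle is attached along an isotropic or Legendrian sphere and induces a contact handle attachment on the convex boundary, hence an elementary sutured cobordism map on $\SFH$ in the formalism of Juh\'asz--Zemke. The key point is that each of these elementary maps takes the $\EH$ class to the $\EH$ class: this is the sutured counterpart of the naturality of the Honda--Kazez--Mati\'c contact invariant under gluing of standard contact handles, of which the complete gluing recalled above, sending $\EH(\Lambda)$ to $\EH(\xi)$, is a special case. Composing over all handles then shows that the sutured cobordism map of $W$ sends $\EH(\Lambda_+)$ to $\EH(\Lambda_-)$, and together with the previous paragraph this yields the theorem. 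Throughout I would keep careful track of the orientation reversal, which is precisely why the reverse concordance $\ol{\cL'}$ and the reversed Weinstein cobordism occur, and of the decoration $\sigma'$, whose two arcs realize the splitting into $R_\pm$ that is matched with $R_\pm(\g_{\Lambda_\pm})$ on the convex boundary.

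The hard part will be making the handle-by-handle argument precise: establishing that the sutured map attached to a single Weinstein handle genuinely agrees with the Juh\'asz--Zemke contact handle map and that the latter preserves the $\EH$ class, including the critical index-two (Legendrian) handles, where the contact surgery on the boundary must be reconciled with the change in the sutured structure. A secondary difficulty is verifying that the Weinstein handle decomposition of $W$ can be arranged compatibly with the handle decomposition of $L'$ underlying the definition of $F_{\ol{\cL'}}$, so that the two cobordism maps are matched on each elementary piece rather than only as composites.
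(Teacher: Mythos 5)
Your proposal is correct and follows essentially the same route as the paper: identify $F_{\ol{\cL'}}$ with the sutured cobordism map of the concordance complement (the paper does this via its parametrized-concordance formalism and Lemma~\ref{lem:surgery}), then use regularity, via Eliashberg--Ganatra--Lazarev, to exhibit that complement as a Weinstein sutured cobordism. The handle-by-handle preservation of the $\EH$ class that you single out as the hard remaining step is precisely what the paper quotes as a black box from \cite[Theorem~11.24]{cob} (the reverse of a Weinstein sutured manifold cobordism preserves the $\EH$ class), so it does not need to be reproved.
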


It is natural to ask whether Theorem~\ref{thm:main} holds for exact Lagrangian concordances.
It would be true if every exact Lagrangian concordance were regular; see Eliashberg~\cite[Problem~5.1]{Wein}.

Given a Legendrian knot $\Lambda$ in a contact 3-manifold $(Y,\xi)$, Lisca, Ozsv\'ath, Stipsicz, and Szab\'o~\cite{LOSS}
defined an invariant $\L(\Lambda) \in \HFKh(-Y,\Lambda,\t_\xi)$, where $\t_\xi$ is the $\spinc$ structure defined by
the contact structure~$\xi$. This is now commonly known as the LOSS invariant.
In light of naturality, this is really an invariant of a Legendrian knot with a decoration~$P$
consisting of two points; i.e.,
\[
\L(\Lambda, P) \in \HFKh(-Y, \Lambda, P, \t_\xi).
\]

Baldwin, Vela-Vick, and V\'ertesi~\cite[p.~926]{equivalence} conjectured that the
LOSS invariant is functorial under Lagrangian cobordisms.
This is supported by a result of Baldwin and Sivek~\cite[Theorem~1.2]{monopole-loss},
which gives a contravariant map for every Lagrangian concordance
that preserves a monopole Floer Legendrian knot invariant. However, they do not show
that this map is an invariant of the Lagrangian cylinder.
Furthermore, they also proved \cite[Theorem~1.5]{iso} that there is a
(non-natural) isomorphism between knot Floer homology and monopole knot homology
that sends the LOSS invariant to their monopole Legendrian invariant.
It is also worth mentioning that it is currently not known whether
the cobordism maps in the Heegaard Floer setting agree with the maps in the monopole setting.

Stipsicz and V\'ertesi~\cite{Vertesi} showed that gluing a suitable basic slice to $\partial M_\Lambda$
induces a map
\[
F \colon \SFH(-M_\Lambda, -\g_\Lambda) \to \HFKh(-Y, \Lambda, P)
\]
that takes $\EH(\Lambda)$ to $\L(\Lambda, P)$.
As $F$ is not always invertible, $\L$ carries less information than $\EH$.
However, unlike $\EH$, the LOSS invariant can vanish even when $\EH(\xi) \neq 0$,
which can be used to obstruct the existence of regular Lagrangian concordances.
Our second result is that $\L$ is functorial under regular Lagrangian concordances.

\begin{thm}\label{thm:L}
Let $L$ be a regular Lagrangian concordance from $\Lambda_-$ to $\Lambda_+$
in the Weinstein cobordism $(X,\omega)$ from $(Y_-,\xi_-)$ to $(Y_+,\xi_+)$.
Choose arbitrary decorations $P_\pm$ on $\Lambda_\pm$ consisting of two points,
and a decoration $\sigma$ on $L$ consisting of two arcs, one of which
connects $R_-(P_-)$ and $R_-(P_+)$, and the other $R_+(P_-)$ and $R_+(P_+)$. Then
$\cL := \left(X, L, \sigma \right)$
is a decorated concordance
from $(Y_-, \Lambda_-, P_-)$ to $(Y_+, \Lambda_+, P_+)$ such that
\[
F_{\ol{\cL}} \left(\L(\Lambda_+, P_+)\right) = \L(\Lambda_-, P_-).
\]
\end{thm}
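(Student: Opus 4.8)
The plan is to deduce Theorem~\ref{thm:L} from Theorem~\ref{thm:main} by exploiting the Stipsicz--V\'ertesi relationship between $\EH$ and $\L$. Recall from the discussion preceding the statement that gluing a suitable basic slice to $\partial M_\Lambda$ induces a map $F \colon \SFH(-M_\Lambda, -\g_\Lambda) \to \HFKh(-Y,\Lambda,P)$ that carries $\EH(\Lambda)$ to $\L(\Lambda,P)$. Writing $F_\pm$ for these gluing maps at the two ends, and using the identification $\SFH(-M_{\Lambda_\pm}, -\g_{\Lambda_\pm}) \cong \HFKh(-Y_{\Lambda_\pm}, \Lambda_\pm', P_{\Lambda_\pm})$, I would establish the commutative square
\[
\begin{CD}
\SFH(-M_{\Lambda_+}, -\g_{\Lambda_+}) @>{F_+}>> \HFKh(-Y_+, \Lambda_+, P_+) \\
@V{F_{\ol{\cL'}}}VV @VV{F_{\ol{\cL}}}V \\
\SFH(-M_{\Lambda_-}, -\g_{\Lambda_-}) @>{F_-}>> \HFKh(-Y_-, \Lambda_-, P_-),
\end{CD}
\]
in which the left vertical arrow is the map of Theorem~\ref{thm:main} transported across the above identification. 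Granting commutativity, the theorem follows by a diagram chase: we compute $F_{\ol{\cL}}(\L(\Lambda_+,P_+)) = F_{\ol{\cL}}(F_+(\EH(\Lambda_+)))$, which by commutativity of the square equals $F_-(F_{\ol{\cL'}}(\EH(\Lambda_+))) = F_-(\EH(\Lambda_-)) = \L(\Lambda_-,P_-)$, where the last two equalities use Theorem~\ref{thm:main} and the Stipsicz--V\'ertesi relation $F_-(\EH(\Lambda_-)) = \L(\Lambda_-,P_-)$.

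The geometric input that makes the square plausible is that both vertical maps are cobordism maps of the \emph{same} underlying sutured cobordism. Indeed, $X_L$ is obtained by gluing $L \times D^2$ to $W := X \setminus N(L)$, and $L \times D^2$ is a tubular neighborhood $N(L')$ of $L'$, so $X_L \setminus N(L')$ is identified with $W$, whose lateral boundary $\partial N(L) \cong T^2 \times I$ is a product cobordism from $\partial M_{\Lambda_-}$ to $\partial M_{\Lambda_+}$. The two cobordism maps differ only in the sutures carried by this lateral boundary: for $F_{\ol{\cL'}}$ the sutures have the $\tb$-slope $\g_{\Lambda_\pm}$, whereas for $F_{\ol{\cL}}$ they are meridional. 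Since $\tb(\Lambda_-) = \tb(\Lambda_+)$, the Stipsicz--V\'ertesi basic slice can be attached simultaneously along all of $\partial N(L)$ as a product (basic slice)$\,\times I$, converting the $\g_\Lambda$-sutured cobordism into the meridionally sutured one and restricting to $F_\pm$ on the two ends.

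The main obstacle is to prove that this product basic-slice attachment is natural with respect to the cobordism maps, that is, that the square commutes. I would reduce this to a naturality statement for the sutured gluing maps: attaching a fixed contact piece along a product collar of the lateral boundary of a sutured cobordism intertwines the cobordism maps before and after gluing. Concretely, I would choose Morse/Heegaard models for $\ol{\cL}$ and for $\ol{\cL'}$ that agree outside a collar of $\partial N(L)$, and in which all the handle attachments and basepoint moves defining the knot cobordism map are supported in the interior, disjoint from the region where the basic slice is glued. Since the attaching region is a product in the cobordism direction, the gluing and the cobordism map act on disjoint parts of the diagram and hence commute; this is the same mechanism by which the Honda--Kazez--Mati\'c gluing maps are functorial, now carried out one dimension up along the concordance.

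Finally, I would check that the decorations are matched by this construction. The arcs of $\sigma'$ on $L' \subset X_L$ and of $\sigma$ on $L \subset X$ both join the two $R_-$-regions and the two $R_+$-regions, so under the identification $X_L \setminus N(L') = W = X \setminus N(L)$ they correspond, and the product basic-slice collar carries one decorated suture system to the other. This guarantees that the vertical maps in the square are exactly $F_{\ol{\cL'}}$ and $F_{\ol{\cL}}$ for the chosen decorations, so that the diagram chase above yields $F_{\ol{\cL}}(\L(\Lambda_+,P_+)) = \L(\Lambda_-,P_-)$, completing the proof.
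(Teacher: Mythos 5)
Your proposal is correct and is essentially the paper's own argument: your commutative square is exactly Proposition~\ref{prop:commut} specialized to $C = L$, $r' = 0$, and $\zeta$ the Stipsicz--V\'ertesi basic slice (with your identification of the two vertical maps being the paper's Lemma~\ref{lem:surgery}), and the paper proves commutativity by the same mechanism you describe: the handle attachments can be pushed off the product lateral collar so that disjoint gluing and handle maps commute, and $I$-invariance of the collar contact structures lets the basic slice slide from one end to the other. The concluding diagram chase, combining Theorem~\ref{thm:main} with $\Phi_{\Lambda_\pm,\zeta}\left(\EH(\Lambda_\pm)\right) = \L(\Lambda_\pm,P_\pm)$, matches the paper's verbatim.
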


This implies the following, which also follows from the work of
Baldwin and Sivek~\cite{monopole-loss}.

\begin{cor} \label{cor:vanishing}
  Let $\Lambda_\pm$ be a Legendrian knot in $(Y_\pm, \xi_\pm)$.
  If $\L(\Lambda_+) = 0$ but $\L(\Lambda_-) \neq 0$, then there is no regular
  Lagrangian concordance from $\Lambda_-$ to $\Lambda_+$.
\end{cor}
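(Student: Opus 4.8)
The plan is to argue by contradiction using Theorem~\ref{thm:L}. Suppose that there exists a regular Lagrangian concordance $L$ from $\Lambda_-$ to $\Lambda_+$; by definition this sits inside some Weinstein cobordism $(X, \omega)$ from $(Y_-, \xi_-)$ to $(Y_+, \xi_+)$. First I would fix arbitrary point decorations $P_\pm$ on $\Lambda_\pm$ together with a compatible decoration $\sigma$ on $L$ of the type required by Theorem~\ref{thm:L} — two arcs, one joining $R_-(P_-)$ to $R_-(P_+)$ and the other joining $R_+(P_-)$ to $R_+(P_+)$ — so that $\cL = (X, L, \sigma)$ is a decorated concordance from $(Y_-, \Lambda_-, P_-)$ to $(Y_+, \Lambda_+, P_+)$.

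Applying Theorem~\ref{thm:L} to this data yields the identity
\[
F_{\ol{\cL}}\left(\L(\Lambda_+, P_+)\right) = \L(\Lambda_-, P_-).
\]
Next I would invoke the hypothesis $\L(\Lambda_+) = 0$. The only point requiring care is that the LOSS invariant genuinely depends on the decoration, whereas the hypothesis is stated without reference to one. I would resolve this by recalling that sliding the decoration points around $\Lambda_\pm$ induces the monodromy action on $\HFKh$, which is realized by isomorphisms; hence the vanishing of $\L(\Lambda_\pm, P_\pm)$ is independent of the chosen decoration, and $\L(\Lambda_+) = 0$ forces $\L(\Lambda_+, P_+) = 0$ for our particular $P_+$. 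Since $F_{\ol{\cL}}$ is an $\FF_2$-linear map, the left-hand side of the displayed identity then vanishes.

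It follows that $\L(\Lambda_-, P_-) = 0$, and by the same monodromy argument $\L(\Lambda_-) = 0$, contradicting the hypothesis $\L(\Lambda_-) \neq 0$. Hence no regular Lagrangian concordance from $\Lambda_-$ to $\Lambda_+$ can exist. Since the statement is a formal consequence of Theorem~\ref{thm:L}, I expect no real obstacle beyond the bookkeeping of decorations; all of the substantive work lies in establishing the functoriality statement of Theorem~\ref{thm:L} itself.
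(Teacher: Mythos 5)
Your proof is correct and takes essentially the same route as the paper, which offers no separate argument for this corollary but presents it as an immediate formal consequence of Theorem~\ref{thm:L}: assume a regular Lagrangian concordance exists, apply the identity $F_{\ol{\cL}}\left(\L(\Lambda_+, P_+)\right) = \L(\Lambda_-, P_-)$, and use linearity of $F_{\ol{\cL}}$ to derive a contradiction. Your extra care about decoration-independence of the vanishing of the LOSS invariant (via the basepoint-moving isomorphisms and naturality) is a point the paper leaves implicit, and it is handled correctly.
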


Marengon and the second author~\cite[Theorem~1.2]{concordance} showed that,
given a decorated knot concordance $\CC$ in $I \times S^3$ from $(S^3, K_0, P_0)$ to $(S^3, K_1, P_1)$
and admissible diagrams $(\S_i, \bolda_i, \boldb_i, w_i, z_i)$ of $(S^3, K_i, P_i)$
for $i \in \{0, 1\}$, there is a filtered chain map
\[
f_\CC \colon \CFh(\S_0, \bolda_0, \boldb_0, w_0) \to \CFh(\S_1, \bolda_1, \boldb_1, w_1)
\]
of homological degree zero such that the induced morphism of spectral sequences
agrees with $F_\CC$ on the $E^1$ page and with $\Id_{\FF_2}$ on the total homology
and on the $E^\infty$ page. Consider the maps
\[
\delta_k \colon \HFKh_d(S^3, K, s) \to \HFKh_{d-1}(S^3, K, s-k)
\]
for $k \ge 1$. Using these, we have the following refinement
of Corollary~\ref{cor:vanishing}.

\begin{cor}
  Let $\Lambda_+$ and $\Lambda_-$ be Legendrian knots in $(S^3, \xi_\std)$.
  If $\delta_k\left(\L(\Lambda_+)\right) = 0$ but
  $\delta_k \left(\L(\Lambda_-)\right) \neq 0$, then there is no decomposable
  Lagrangian concordance from $\Lambda_-$ to $\Lambda_+$.
\end{cor}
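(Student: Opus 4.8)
The plan is to proceed by contraposition: assuming a decomposable Lagrangian concordance exists, I will deduce $\delta_k(\L(\Lambda_-)) = 0$ from $\delta_k(\L(\Lambda_+)) = 0$. Suppose $L$ is a decomposable Lagrangian concordance from $\Lambda_-$ to $\Lambda_+$; by definition it sits inside the symplectization $(\R \times S^3, d(e^t\a))$ of $(S^3, \xi_\std)$, which is a Weinstein cobordism from $(S^3, \xi_\std)$ to itself (its Liouville field $\partial_t$ has no zeros, so the cobordism carries no handles). By Conway--Etnyre--Tosun \cite[Lemma~3.4]{fillings}, $L$ is regular, so Theorem~\ref{thm:L} applies. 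Choosing compatible decorations $P_\pm$ and $\sigma$, it yields a decorated concordance $\cL = (X, L, \sigma)$ with $F_{\ol{\cL}}\!\left(\L(\Lambda_+)\right) = \L(\Lambda_-)$. Since $X$ is topologically $I \times S^3$, the reverse $\ol{\cL}$ is a decorated knot concordance in $I \times S^3$, so Theorem~1.2 of \cite{concordance} is available for $F_{\ol{\cL}}$.

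Next I would feed $\ol{\cL}$ into that theorem to present $F_{\ol{\cL}}$ as the map induced on the $E^1$ page $\HFKh$ by a filtered chain map of homological degree zero between admissible complexes computing $\CFh(S^3)$ with its Alexander filtration. The maps $\delta_k$ are, by construction, governed by this filtration -- they are the components of the differential that drop the Alexander grading by $k$ (equivalently, the higher differentials of the associated spectral sequence). A filtered chain map induces a morphism of the corresponding spectral sequences that commutes with every differential; combined with the identification of its $E^1$-page action with $F_{\ol{\cL}}$, this gives the intertwining relation $\delta_k \circ F_{\ol{\cL}} = F_{\ol{\cL}} \circ \delta_k$ in the appropriate grading and filtration level, wherever both sides are defined. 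The degree-zero and filtered properties of the chain map guarantee that the homological grading (dropped by one under $\delta_k$) and the Alexander grading (dropped by $k$) match up unambiguously.

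With this in hand the conclusion is immediate: if $\delta_k\!\left(\L(\Lambda_+)\right) = 0$, then
\[
\delta_k\!\left(\L(\Lambda_-)\right) = \delta_k\!\left(F_{\ol{\cL}}(\L(\Lambda_+))\right) = F_{\ol{\cL}}\!\left(\delta_k(\L(\Lambda_+))\right) = 0,
\]
contradicting $\delta_k(\L(\Lambda_-)) \neq 0$, so no such concordance can exist. The main obstacle I anticipate is not topological but homological-algebraic: making precise the claim that the Marengon--Juh\'asz filtered chain map intertwines the $\delta_k$. One must fix an invariant meaning for the maps $\delta_k$ on $\HFKh$ (reading them off a filtered model of $\CFh(S^3)$), check that the hypothesis $\delta_k(\L(\Lambda_+)) = 0$ is interpreted consistently with the page on which $\delta_k$ is defined, and verify that upgrading the $E^1$-level compatibility supplied by \cite{concordance} to compatibility with all $\delta_k$ is exactly the functoriality of the Alexander spectral sequence under filtered maps. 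Everything else -- regularity, and the value of the concordance map on $\L$ -- is supplied directly by the results quoted above.
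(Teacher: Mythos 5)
Your proof is correct and takes essentially the route the paper intends (the paper states this corollary without an explicit proof, as an immediate consequence of the results quoted around it): decomposable implies regular by Conway--Etnyre--Tosun, the truncated symplectization is a trivial Weinstein cobordism so Theorem~\ref{thm:L} gives $F_{\ol{\cL}}\left(\L(\Lambda_+)\right) = \L(\Lambda_-)$, and the Marengon--Juh\'asz filtered chain map forces $F_{\ol{\cL}}$ to intertwine the $\delta_k$ as a morphism of the Alexander spectral sequences. The interpretive subtleties you flag about pinning down $\delta_k$ for $k \ge 2$ (which page it lives on, and that the hypothesis must be read there) are genuine but are inherent in the paper's own loose statement of $\delta_k$, not defects of your argument.
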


Note that $\L$ is unchanged by positive stabilization, and hence gives rise to an
invariant $\Th$ of transverse knots. Baldwin, Vela-Vick,
and V\'ertesi~\cite[Theorem~1.1]{equivalence}
showed that $\Th$ agrees with $\hat{\theta}$ defined using
grid diagrams, which is algorithmically computable.
Ng, Ozsv\'ath, and Thurston~\cite[Theorem~1.1 and~1.2]{transverse}
proved that the mirrors of $10_{132}$ and $12n_{200}$ have Legendrian representatives
$\Lambda_1$ and $\Lambda_2$, both with $\tb = -1$ and $r = 0$, for which $\hat{\theta}(\Lambda_1) = 0$ and
$\hat{\theta}(\Lambda_2) \neq 0$. Furthermore, the pretzel knots $P(-4,-3,3)$ and
$P(-6,-3,3)$ have pairs of Legendrian representatives $\Lambda_1$ and $\Lambda_2$,
both with $\tb = -1$ and $r = 0$, for which $\hat{\theta}(\Lambda_1) \neq 0$ and $\hat{\theta}(\Lambda_2) \neq 0$,
but $\d_1 \circ \hat{\theta}(\Lambda_1) = 0$ while $\d_1 \circ \hat{\theta}(\Lambda_2) \neq 0$.
Hence, we obtain the following.

\begin{prop} \label{prop:nonexistence}
  Let
  \[
  K \in \{\, -10_{132}, -12n_{200}, P(-4,-3,3), P(-6,-3,3) \,\}.
  \]
  Then there exist Legendrian knots $\Lambda_1$ and $\Lambda_2$
  in $(S^3, \xi_\std)$ of topological type $K$
  with the same rotation and Thurston--Bennequin numbers such that there
  is no decomposable Lagrangian concordance from $\Lambda_2$ to $\Lambda_1$.
\end{prop}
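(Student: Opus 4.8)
The plan is to reduce the statement, one knot at a time, to the two vanishing results established above---Corollary~\ref{cor:vanishing} and its $\d_k$-refinement---by translating the grid-diagram computations of Ng, Ozsv\'ath, and Thurston into statements about the LOSS invariant $\L$. The key translation is that, for a Legendrian knot $\Lambda$ in $(S^3, \xi_\std)$ equipped with a two-point decoration, the transverse invariant $\hat{\theta}(\Lambda)$ of its transverse push-off is, after the identification $\Th = \hat{\theta}$ of Baldwin--Vela-Vick--V\'ertesi and the positive-stabilization invariance of $\L$, exactly the class $\L(\Lambda) \in \HFKh(-S^3, \Lambda, \t_{\xi_\std})$. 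In particular $\hat{\theta}(\Lambda) = 0$ if and only if $\L(\Lambda) = 0$, and $\d_k$ carries $\hat{\theta}(\Lambda)$ to $\d_k(\L(\Lambda))$, so that all the vanishing and non-vanishing statements quoted just above hold verbatim with $\hat{\theta}$ replaced by $\L$.

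First I would dispose of the two mirror cases $K \in \{-10_{132}, -12n_{200}\}$. By the cited result of Ng, Ozsv\'ath, and Thurston there are Legendrian representatives $\Lambda_1, \Lambda_2$ of $K$, both with $\tb = -1$ and $r = 0$, for which (after the translation) $\L(\Lambda_1) = 0$ and $\L(\Lambda_2) \neq 0$. Setting $\Lambda_+ := \Lambda_1$ and $\Lambda_- := \Lambda_2$, Corollary~\ref{cor:vanishing} shows that there is no regular Lagrangian concordance from $\Lambda_2$ to $\Lambda_1$. Since every decomposable Lagrangian concordance in the symplectization of $(S^3, \xi_\std)$ is regular by \cite[Lemma~3.4]{fillings}, there is in particular no decomposable such concordance, which is exactly the assertion for these two knots.

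The two pretzel cases $K \in \{P(-4,-3,3), P(-6,-3,3)\}$ are handled identically, except that now both LOSS invariants are non-zero and one must pass to the refinement. Ng, Ozsv\'ath, and Thurston provide Legendrian representatives $\Lambda_1, \Lambda_2$ of $K$, again with common $\tb = -1$ and $r = 0$, with $\L(\Lambda_1) \neq 0 \neq \L(\Lambda_2)$ but $\d_1(\L(\Lambda_1)) = 0$ and $\d_1(\L(\Lambda_2)) \neq 0$. Taking $\Lambda_+ := \Lambda_1$ and $\Lambda_- := \Lambda_2$ and applying the preceding corollary (the $\d_k$-refinement of Corollary~\ref{cor:vanishing}) with $k = 1$ rules out a decomposable Lagrangian concordance from $\Lambda_2$ to $\Lambda_1$.

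The only genuinely delicate point is the translation in the first paragraph: one must check that the class computed from the grid diagram and the class $\L(\Lambda)$ really do live in the same group $\HFKh(-S^3, \Lambda, \t_{\xi_\std})$ and agree there, rather than merely vanishing simultaneously, so that the $\d_1$-statement---and not just the vanishing statement---transfers. This rests on the compatibility of the naturality identifications with the Baldwin--Vela-Vick--V\'ertesi equivalence together with the invariance of $\L$ under positive stabilization; once this is in place, and once one observes that the shared values $\tb = -1$, $r = 0$ are exactly what makes the classical invariants fail to obstruct, the conclusion is immediate from the two corollaries.
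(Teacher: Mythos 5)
Your proposal is correct and follows essentially the same route as the paper: the paper's (implicit) proof is exactly the preceding discussion --- the identification $\Th = \hat{\theta}$ of Baldwin--Vela-Vick--V\'ertesi together with the stabilization invariance of $\L$, the Ng--Ozsv\'ath--Thurston grid computations, Corollary~\ref{cor:vanishing} combined with the regularity of decomposable concordances \cite[Lemma~3.4]{fillings} for the mirror knots, and the $\delta_k$-refinement with $k=1$ for the pretzel knots. Your extra care about the classes agreeing under the natural identifications (not merely vanishing simultaneously) is a point the paper also glosses over, and does not change the argument.
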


The knot $P(-4,-3,3)$ is $\ol{10_{140}}$ and $P(-6, -3, 3)$ is $12n_{582}$;
see \cite[Remark~4.5]{obstruction}. Furthermore, the pairs of Legendrian
representatives $\Lambda_1$ and $\Lambda_2$
shown in \cite[Figures~4 and~5]{transverse} are Lagrangian slice; i.e.,
there are Lagrangian concordances from the unknot to $\Lambda_1$ and to $\Lambda_2$.
In fact, these are both decomposable.
It follows from Proposition~\ref{prop:nonexistence} that
there is no decomposable Lagrangian concordance from $\Lambda_2$ to the unknot.
Indeed, otherwise, composing this with the decomposable Lagrangian concordance from the unknot
to $\Lambda_1$, we would get a decomposable Lagrangian concordance from $\Lambda_2$ to $\Lambda_1$.
More generally, Cornwell, Ng, and Sivek~\cite[Theorem~3.2]{obstruction} showed
that there is no decomposable Lagrangian concordance from a Legendrian knot to
any Legendrian unknot.

Let $K$ be a framed knot in $Y$ with longitude~$\ell$ and meridian~$m$.
Then let $M = Y \setminus N(K)$ be the knot
complement, and suppose that $\g_n$ is a dividing set on $\partial M$ consisting of
two parallel curves of slope $\ell + n \cdot m$.
Gluing a basic slice to $\partial M \times I$ with dividing set $\g_n$ on $\partial M \times \{0\}$
and $\g_{n+1}$ on $\partial M \times \{1\}$ induces a map
\[
\sigma_+ \colon \SFH(-M,-\g_n) \to \SFH(-M,-\g_{n+1}).
\]
We will prove the following result in Section~\ref{sec:proofs}.

\begin{cor} \label{cor:power}
 Let $\Lambda_\pm$ be a Legendrian knot in $(Y_\pm, \xi_\pm)$.
 If there is a regular Lagrangian concordance from $\Lambda_-$ to $\Lambda_+$ and
 $\L(\Lambda_+) \in \im(\sigma_+^k)$ for some $k \in \NN$, then
 $\L(\Lambda_-) \in \im(\sigma_+^k)$.
\end{cor}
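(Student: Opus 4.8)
The plan is to read the hypothesis at the level of the sutured complements $M_\pm = M_{\Lambda_\pm}$ and to show that the concordance cobordism map of Theorem~\ref{thm:L} intertwines the basic slice maps $\sigma_+$. First I would record the input from the Stipsicz--V\'ertesi construction~\cite{Vertesi}: the maps $\sigma_+$ assemble the groups $\SFH(-M_\pm,-\g_n)$ into a direct system whose limit is $\HFKh(-Y_\pm,\Lambda_\pm,P_\pm)$, and under this identification $\L(\Lambda_\pm)$ is the image of $\EH(\Lambda_\pm)$. Reading the hypothesis $\L(\Lambda_+)\in\im(\sigma_+^k)$ through this identification, it says that $\L(\Lambda_+)=\sigma_+^k(y)$ for some class $y\in\SFH(-M_+,-\g_{n-k})$, where $\g_n$ is the slope at which $\L(\Lambda_+)$ is represented.

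Next I would promote the cobordism map to a family indexed by the slope. The complement $X\setminus N(L)$ is a cobordism from $M_-$ to $M_+$ whose vertical boundary is the tube $T^2\times I$ around $L$, and the sutures $\g_n$ extend across it as a product. Running the construction behind Theorems~\ref{thm:main} and~\ref{thm:L} with $\g_n$ sutures on each end should produce sutured cobordism maps
\[
F_n\colon \SFH(-M_+,-\g_n)\to\SFH(-M_-,-\g_n)
\]
that pass to $F_{\ol\cL}$ in the direct limit. The crux is then the commutativity
\[
F_{n+1}\circ\sigma_+ = \sigma_+\circ F_n.
\]
Both sides are gluing maps supported on complementary parts of the boundary: $F_n$ is glued along the horizontal ends $M_\pm$, whereas $\sigma_+$ attaches a basic slice to the toroidal boundary that persists through the cobordism. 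By the functoriality and associativity of the sutured gluing maps (the Honda--Kazez--Mati\'c formalism~\cite{TQFT} together with the cobordism maps of~\cite{cob}), both composites should equal the map induced by the single sutured manifold obtained by attaching the basic slice along the tube of $X\setminus N(L)$, so they coincide. A version of this compatibility already underlies the passage from Theorem~\ref{thm:main} to Theorem~\ref{thm:L}; here the only change is that the basic slice is attached alongside the concordance rather than alongside the Stipsicz--V\'ertesi gluing.

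Granting the commutativity, I would finish by a diagram chase: iterating it $k$ times and invoking Theorem~\ref{thm:L},
\[
\L(\Lambda_-) = F_n\!\left(\L(\Lambda_+)\right) = F_n\!\left(\sigma_+^k(y)\right) = \sigma_+^k\!\left(F_{n-k}(y)\right) \in \im(\sigma_+^k),
\]
since $F_{n-k}(y)\in\SFH(-M_-,-\g_{n-k})$. The main obstacle is the commutativity step: one must construct the concordance cobordism map on the complements for every slope, not just the meridional one, and check that it is natural with respect to basic slice attachment along the disjoint toroidal boundary. This is precisely a gluing-associativity statement in the sutured cobordism TQFT, and verifying that the contact (basic slice) gluing maps and the decorated cobordism maps of~\cite{cob} compose compatibly is where the real care is needed.
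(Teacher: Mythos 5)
Your proposal follows essentially the same route as the paper. The slope-indexed maps $F_n$ you propose to construct are precisely the parametrized concordance maps $F_{\ol{C},r}$ of Section~\ref{sec:param}; the commutation $F_{n+1}\circ\sigma_+=\sigma_+\circ F_n$ is exactly Proposition~\ref{prop:stab}, which the paper deduces from Proposition~\ref{prop:commut} by the same disjoint-gluing argument you sketch (the 4-dimensional handles can be attached away from the vertical boundary $Z$, gluing maps for disjoint pieces commute by \cite[Proposition~11.5]{cob}, and $I$-invariance lets one slide the basic slice from the outgoing to the incoming end); and the final chase via Theorem~\ref{thm:L}, with Lemma~\ref{lem:surgery} identifying the relevant concordance maps, is the paper's proof.

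One correction to your framing, however: the direct limit of $\SFH(-M_\pm,-\g_n)$ along the maps $\sigma_+$ is $\HFKm(-Y_\pm,\Lambda_\pm)$, not $\HFKh(-Y_\pm,\Lambda_\pm,P_\pm)$, and the image of $\EH$ in that limit is the minus invariant $\Lm$, not $\L$; that limit picture is the subject of the paper's separate corollary about $\im(U^k)$. The hat group $\HFKh(-Y_\pm,\Lambda_\pm,P_\pm)$ is tautologically $\SFH$ of the complement with meridional sutures, and the hypothesis $\L(\Lambda_+)\in\im(\sigma_+^k)$ must be read via compositions of basic-slice gluing maps whose final stage is the Stipsicz--V\'ertesi map into this meridional group, rather than via a direct limit. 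Since your diagram chase only ever uses finite stages and the commutativity statement, the slip does not damage the structure of the argument; it only means that the map you call $F_n$ acting on $\L(\Lambda_+)$ should be the meridional-slope map $F_{\ol{L},0}=F_{\ol{\cL}}$, which is exactly the map that Theorem~\ref{thm:L} controls, with the intermediate slopes handled by the commutation as you describe.
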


Let $K$ be a framed null-homologous knot in $Y$.
The first author~\cite{comparing} and Etnyre, Vela-Vick, and Zarev~\cite{limit}
showed that the limit of $\SFH(M, \g_n)$ along the maps~$\sigma_+$ is
$\HFKm(-Y, K)$. Furthermore, if $K$ is Legendrian, then $\EH(K)$
limits to the LOSS invariant $\Lm(K)$. Then Proposition~\ref{prop:stab}
implies that every parametrized concordance (see Section~\ref{sec:param}) induces a map on the limit.
This construction is due to Tovstopyat-Nelip~\cite{limitconcordance} when $Y = S^3$.
It is an interesting question whether this coincides with the cobordism map defined
by Zemke on $\HFKm$.

\begin{prop}
  Suppose that $K_\pm$ is a null-homologous knot in $Y_\pm$, and
  let $C$ be a parametrized concordance from $K_-$ to $K_+$.
  Then its reverse induces a map
  \[
  F^-_{\ol{C}} \colon \HFKm(-Y_+, K_+) \to \HFKm(-Y_-, K_-).
  \]
  Furthermore, if $\xi_\pm$ is a contact structure on $Y_\pm$,
  and $\Lambda_\pm$ is a Legendrian representative of $K_\pm$,
  while $C$ is a regular Lagrangian concordance, then
  \[
  F^-_{\ol{C}} \left( \Lm(\Lambda_+) \right) = \Lm(\Lambda_-).
  \]
\end{prop}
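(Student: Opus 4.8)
The plan is to exhibit both minus groups as direct limits of sutured Floer homologies and to obtain $F^-_{\ol{C}}$ as the induced map of limits, then to read off the behaviour of $\Lm$ at the level of the $\tb$ slope. Write $M_\pm = Y_\pm \setminus N(K_\pm)$, and let $\g_n^\pm$ be the dividing set of two parallel curves of slope $\ell_\pm + n\cdot m_\pm$ on $\partial M_\pm$, where $\ell_\pm$ and $m_\pm$ are the longitude and meridian determined by the parametrization of $C$. By~\cite{comparing, limit},
\[
\HFKm(-Y_\pm, K_\pm) = \varinjlim_n \SFH(-M_\pm, -\g_n^\pm),
\]
with structure maps $\sigma_+$; write $\iota_n^\pm \colon \SFH(-M_\pm, -\g_n^\pm) \to \HFKm(-Y_\pm, K_\pm)$ for the canonical maps into the limit, so that $\iota_{n+1}^\pm \circ \sigma_+ = \iota_n^\pm$.

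First I would define, for each $n$, the map $F_n \colon \SFH(-M_+, -\g_n^+) \to \SFH(-M_-, -\g_n^-)$ to be the knot cobordism map of~\cite{cob} for the reverse of the decorated concordance obtained by Dehn filling $C$ with slope $\g_n$ and decorating as in Theorem~\ref{thm:main}, using the identification of $\SFH(-M_\pm, -\g_n^\pm)$ with the knot Floer homology of the corresponding Dehn-filled knot; the slope-$\tb$ instance is exactly the setting of Definition~\ref{def:XL} and Theorem~\ref{thm:main}. The parametrization of $C$ is what makes this family coherent as $n$ varies. By Proposition~\ref{prop:stab}, these maps commute with the stabilization maps, $\sigma_+ \circ F_n = F_{n+1} \circ \sigma_+$ for every $n$. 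Granting this, the universal property of the direct limit yields a unique map
\[
F^-_{\ol{C}} \colon \HFKm(-Y_+, K_+) \to \HFKm(-Y_-, K_-)
\]
with $F^-_{\ol{C}} \circ \iota_n^+ = \iota_n^- \circ F_n$ for all $n$, which proves the first assertion.

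For the second assertion, suppose $C$ is a regular Lagrangian concordance, and choose the parametrization so that $\ell_\pm$ is the Thurston--Bennequin framing; this is coherent across $C$ since $\tb(\Lambda_-) = \tb(\Lambda_+)$, and then $\g_0^\pm$ is the $\tb$ slope, so $\EH(\Lambda_\pm) \in \SFH(-M_\pm, -\g_0^\pm)$. Because $\EH$ limits to $\Lm$ along $\sigma_+$, we have $\Lm(\Lambda_\pm) = \iota_0^\pm(\EH(\Lambda_\pm))$. By construction $F_0$ is the knot cobordism map of Theorem~\ref{thm:main}, so $F_0(\EH(\Lambda_+)) = \EH(\Lambda_-)$, and combining this with the compatibility relation at $n = 0$ gives
\[
F^-_{\ol{C}}\!\left(\Lm(\Lambda_+)\right) = \iota_0^-\!\left(F_0(\EH(\Lambda_+))\right) = \iota_0^-\!\left(\EH(\Lambda_-)\right) = \Lm(\Lambda_-).
\]

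The crux, and the main obstacle, is Proposition~\ref{prop:stab}: the commutativity $\sigma_+ \circ F_n = F_{n+1} \circ \sigma_+$. Establishing it amounts to showing that inserting a basic slice along the boundary torus, which realizes $\sigma_+$ as a Honda--Kazez--Mati\'c gluing map, commutes with the cobordism map of~\cite{cob} coming from the removed neighborhood of $C$. The parametrization is what permits these two operations to be localized on disjoint parts of the boundary, so that they can be interchanged; carrying this out at the level of the underlying sutured Heegaard diagrams and gluing regions is where the real work lies. Once the square commutes, the rest is the formal bookkeeping of direct limits combined with the fixed-slope statement of Theorem~\ref{thm:main}.
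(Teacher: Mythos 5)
Your proposal is correct and takes essentially the same route as the paper's (sketched) argument: the paper obtains this proposition precisely by combining the direct-limit description of $\HFKm$ from~\cite{comparing} and~\cite{limit}, the commutation $\sigma_+ \circ F_{\ol{C},n} = F_{\ol{C},n+1} \circ \sigma_+$ of Proposition~\ref{prop:stab} (which yields the induced map on limits), and Theorem~\ref{thm:main} together with Lemma~\ref{lem:surgery} at the $\tb$ slope, using that $\EH(\Lambda_\pm)$ limits to $\Lm(\Lambda_\pm)$. Your normalization of the framing so that slope $0$ is the contact framing at both ends is the same device the paper uses implicitly in Definition~\ref{def:XL}, so nothing essential is missing.
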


Note that Sarkar~\cite{basepoint} and Zemke~\cite{quasistab} proved that the basepoint moving map is trivial
on $\HFKm$. The analogue of Corollary~\ref{cor:power} is the following:

\begin{cor}
 Let $\Lambda_\pm$ be a null-homologous Legendrian knot in $(Y_\pm, \xi_\pm)$.
 If there is a regular Lagrangian concordance from $\Lambda_-$ to $\Lambda_+$ and
 $\Lm(\Lambda_+) \in \im(U^k)$ for some $k \in \NN$, then
 $\Lm(\Lambda_-) \in \im(U^k)$.
\end{cor}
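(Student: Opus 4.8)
The plan is to prove this exactly as the minus-theory analogue of Corollary~\ref{cor:power}, with the $U$-action playing the role of the basic slice gluing map $\sigma_+$. Let $K_\pm$ denote the topological type of $\Lambda_\pm$. The input I would use is the preceding Proposition: a regular Lagrangian concordance $C$ from $\Lambda_-$ to $\Lambda_+$ gives a map $F^-_{\ol{C}} \colon \HFKm(-Y_+, K_+) \to \HFKm(-Y_-, K_-)$ with $F^-_{\ol{C}}(\Lm(\Lambda_+)) = \Lm(\Lambda_-)$. Once I know that $F^-_{\ol{C}}$ is $\FF_2[U]$-linear, the corollary follows from a one-line computation: writing $\Lm(\Lambda_+) = U^k \cdot \eta$ for some $\eta \in \HFKm(-Y_+, K_+)$, I get
\[
\Lm(\Lambda_-) = F^-_{\ol{C}}\left(\Lm(\Lambda_+)\right) = F^-_{\ol{C}}\left(U^k \eta\right) = U^k \cdot F^-_{\ol{C}}(\eta) \in \im(U^k).
\]

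The step I expect to be the main obstacle is thus the $U$-equivariance of $F^-_{\ol{C}}$. I would not attempt to quote Zemke's $\HFKm$ cobordism maps for this, since --- as noted above --- it is not known that $F^-_{\ol{C}}$ agrees with Zemke's construction. Instead I would argue from the way $F^-_{\ol{C}}$ is built: by Proposition~\ref{prop:stab} it is the map induced on the limit of the groups $\SFH(-M, -\g_n)$ along the maps $\sigma_+$ by the sutured cobordism maps of the parametrized concordance. Those sutured maps commute with the gluing maps $\sigma_+$ by functoriality of the basic slice gluing (this is precisely the commutativity used in Corollary~\ref{cor:power}), and by the first author~\cite{comparing} and Etnyre, Vela-Vick, and Zarev~\cite{limit} the multiplication-by-$U$ endomorphism of $\HFKm(-Y, K)$ is realized by $\sigma_+$ in this limit. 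Therefore the induced map $F^-_{\ol{C}}$ on the limit commutes with $U$, as needed.

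An essentially equivalent perspective, which I would record as a consistency check, is that this corollary is the limit of Corollary~\ref{cor:power}: under the identification of the limit with $\HFKm$, the LOSS invariant $\L$ limits to $\Lm$ and the submodules $\im(\sigma_+^k)$ correspond to $\im(U^k)$, so the finite-level statement of Corollary~\ref{cor:power} becomes the stated corollary in the limit. In either approach the genuine content is the identification of $U$ with $\sigma_+$ in the limit together with the commutativity of the concordance maps and the gluing maps $\sigma_+$. Finally, I would note that, since Sarkar~\cite{basepoint} and Zemke~\cite{quasistab} prove the basepoint-moving map is trivial on $\HFKm$, the map $F^-_{\ol{C}}$, and hence the whole statement, is independent of the auxiliary decoration on $C$, so no decoration needs to be fixed in the hypotheses.
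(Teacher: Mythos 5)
Your overall skeleton is the right one, and it is the paper's intended one: feed $\Lm(\Lambda_+) = U^k\eta$ through the map $F^-_{\ol{C}}$ of the preceding Proposition and conclude by $U$-equivariance. But your justification of the $U$-equivariance --- which you correctly single out as the main obstacle --- rests on a false identification. You claim that the multiplication-by-$U$ endomorphism of $\HFKm(-Y,K)$ is ``realized by $\sigma_+$ in this limit.'' It cannot be: the maps $\sigma_+$ are the structure maps of the directed system whose colimit is $\HFKm(-Y,K)$, so they induce the \emph{identity} on the direct limit. If $U$ were realized by $\sigma_+$, then $U$ would act as the identity on $\HFKm$, which fails already for the unknot, whose $\HFKm$ is the free module $\FF_2[U]$ on which $U$ acts non-trivially. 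What \cite{comparing} and \cite{limit} actually show is that $U$ is induced by the basic slice attachments of the \emph{opposite} sign, call them $\sigma_-$ (corresponding to negative stabilization), which commute with the $\sigma_+$ and therefore descend to the limit; this matches the behavior of the LOSS invariant, for which positive stabilization preserves $\Lm$ while negative stabilization multiplies it by $U$. Consequently, Proposition~\ref{prop:stab} only gives you that $F^-_{\ol{C}}$ is well defined on the limit; it does not give commutation with $U$, and the key step of your argument collapses as written.

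The gap is fixable entirely within the paper's framework, and this is the intended argument: Proposition~\ref{prop:commut} holds for an \emph{arbitrary} contact structure $\zeta$ on $\left(T^2 \times I, \g_{r,r'}\right)$, so apply it with $\zeta$ the negative basic slice on $\left(T^2 \times I, \g_{n,n+1}\right)$ to obtain $\sigma_- \circ F_{\ol{C},n} = F_{\ol{C},n+1} \circ \sigma_-$ for all $n$; combined with Proposition~\ref{prop:stab}, this shows that the limit map $F^-_{\ol{C}}$ is $\FF_2[U]$-linear, after which your one-line computation finishes the proof. For the same reason, your ``consistency check'' is also incorrect: under the limit identification, the images in $\HFKm$ of the finite-stage submodules $\im(\sigma_+^k)$ exhaust all of $\HFKm$ (every element of a direct limit comes from some finite stage), so they do not correspond to $\im(U^k)$, and the present corollary is not literally a limit of Corollary~\ref{cor:power}; it is its analogue, proved by replacing the role of $\sigma_+$ there with that of $\sigma_-$, i.e.\ of $U$, here.
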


\subsection*{Acknowledgment} We would like to thank Sungkyung Kang, Steven Sivek,
And\-r\'as Stipsicz, and Ian Zemke for helpful discussions.

\section{Background} \label{sec:background}

\subsection{Lagrangian cobordisms}
An oriented link $\Lambda$ in the contact 3-manifold
$(Y,\xi)$ is \emph{Legendrian} if it is everywhere tangent to~$\xi$.
Let $(Y_-, \xi_-)$ and $(Y_+, \xi_+)$ be contact 3-manifolds.
Then a \emph{Weinstein cobordism} from $(Y_-, \xi_-)$ to $(Y_+, \xi_+)$
is a quadruple $(X, \omega, v, \phi)$, where
\begin{enumerate}
  \item X is a cobordism from $Y_-$ to $Y_+$,
  \item $\omega$ is a symplectic form on $X$,
  \item $v$ is a Liouville vector field on $X$ (i.e., $L_v \omega = \omega$) that points
  into $X$ along $Y_-$ and points out of $X$ along $Y_+$,
  \item $\lambda = \iota_v \omega$ induces $\xi_\pm$ on $Y_\pm$, and
  \item $\phi \colon X \to \R$ is a Morse function such that $d\phi(v) \ge c\norm{v}^2$ for some $c \in \R_+$
  and some Riemannian metric on $X$.
\end{enumerate}
Weinstein cobordism are precisely the ones that can be obtained by attaching so called Weinstein handles
of indices 0, 1, and 2 to $Y_-$, where the 2-handles are attached along Legendrian knots~$\Lambda$ with framing
$\tb(\Lambda) - 1$.

Suppose now that $\Lambda_\pm$ is a Legendrian link in $(Y_\pm, \xi_\pm)$.
According to Eliashberg, Ganatra, and Lazarev~\cite{flexible},
an \emph{exact Lagrangian cobordism} from $\Lambda_-$ to $\Lambda_+$
consists of a Weinstein cobordism $(X, \omega, v, \phi)$ from $(Y_-, \xi_-)$
to $(Y_+, \xi_+)$, together with a Lagrangian $L \subset X$ such that
\begin{enumerate}
  \item $L$ is exact; i.e., $\lambda|_L$ is exact,
  \item $v$ is tangent to $L$ near $\partial L$, and
  \item $\partial L = -\Lambda_- \cup \Lambda_+$.
\end{enumerate}
The exact Lagrangian cobordism $L$ is \emph{regular} if it is tangent to~$v$;
see \cite[Definition~2.1]{flexible}. In this paper, we say that $L$ is a \emph{Lagrangian concordance}
if it is a cylinder (this notion is more general than what is usually understood by concordance,
in that we do not require the ambient manifold to be the symplectization of a contact manifold).

Let $\a_0 = dz - y dx$ be the standard contact form and $\xi_0 = \ker(\a_0)$
the standard contact structure on $\R^3$.
Note that $S^3$ with the standard contact structure $\xi_\std = \ker(\a_\std)$
is the one-point compactification of $(\R^3,\xi_0)$.
Chantraine~\cite{Chantraine1} originally introduced the notion of \emph{Lagrangian cobordism}
in the symplectization $(\R \times \R^3, d(e^t \alpha_0))$ between Legendrian links
in $(\R^3, \xi_0)$. In the rest of this subsection, we focus on this special case,
as it admits a nice class of regular Lagrangian cobordisms.

\begin{defn}
Let~$\Lambda_-$ and~$\Lambda_+$ be Legendrian links in $(\R^3, \xi_0)$.
Then a Lagrangian cobordism from~$\Lambda_-$ to~$\Lambda_+$ consists of an embedded Lagrangian submanifold~$L$
of the symplectization $\left(\R \times \R^3, d(e^t\alpha_0)\right)$ such that, for some $T \in \R_+$,
\[
\begin{split}
\E_+(L,T) &:= L \cap \left((T,\infty) \times \R^3\right) = (T,\infty) \times \Lambda_+, \\
\E_-(L,T) &:= L \cap \left((-\infty, -T) \times \R^3\right) = (-\infty, -T) \times \Lambda_-,
\end{split}
\]
and $L_T := L \setminus (\E_+(L,T) \cup \E_-(L,T))$ is compact with boundary $\Lambda_+ \times \{T\} \sqcup -\Lambda_- \times \{-T\}$.
The Lagrangian cobordism $L$ is \emph{exact} if there is a function
$f \in C^\infty(L,\R)$ such that $df = e^t\a_0|_L$, and $f$ is constant on $\E_+(L)$ and $\E_-(L)$;
see Ekholm, Honda, and K\'alm\'an~\cite[Definition~1.1]{EHK}.
\end{defn}

According to Chantraine~\cite[Theorem~1.3]{Chantraine1}, if $\Lambda_-$ and $\Lambda_+$ are knots and
$L$ is a Lagrangian cobordism from $\Lambda_-$ to $\Lambda_+$,
then $r(\Lambda_-) = r(\Lambda_+)$ and $\tb(\Lambda_+) - \tb(\Lambda_-) = -\chi(L)$, where
$r$ is the rotation number and $\tb$ is the Thurston--Bennequin number.

If two Legendrian knots are Legendrian isotopic, then they are Lagrangian concordant.
Every Lagrangian concordance is automatically exact.
Chantraine~\cite{Chantraine2} showed that Lagrangian concordance is \emph{not} a symmetric relation.

Chantraine~\cite[Definition~1.4]{Chantraine3} and Ekholm, Honda, and K\'alm\'an~\cite[Section~6]{EHK} defined the class of
\emph{decomposable} Lagrangian cobordisms. These are products of certain elementary cobordisms,
namely Legendrian Reidemeister moves, saddles, and births of unknot components; see Figure~\ref{fig:decomposable}.
Each decomposable Lagrangian cobordism is exact, but it is not known whether the converse holds;
see~\cite[Question~1.5]{Chantraine3}, \cite[Question~8.10]{EHK}, and~\cite[Problem~5.1]{Wein}.
For a potential example of a non-decomposable Lagrangian concordance,
see Cornwell, Ng, and Sivek~\cite[Conjecture~3.3]{obstruction}.
Conway, Etnyre, and Tosun~\cite[Section~3.2]{fillings} extended the notion of decomposable
Lagrangian cobordism to symplectizations of arbitrary contact 3-manifolds, and they
proved~\cite[Lemma~3.4]{fillings} that the decomposable Lagrangian cobordisms in a symplectization
are all regular.

\begin{figure}
  \centering
  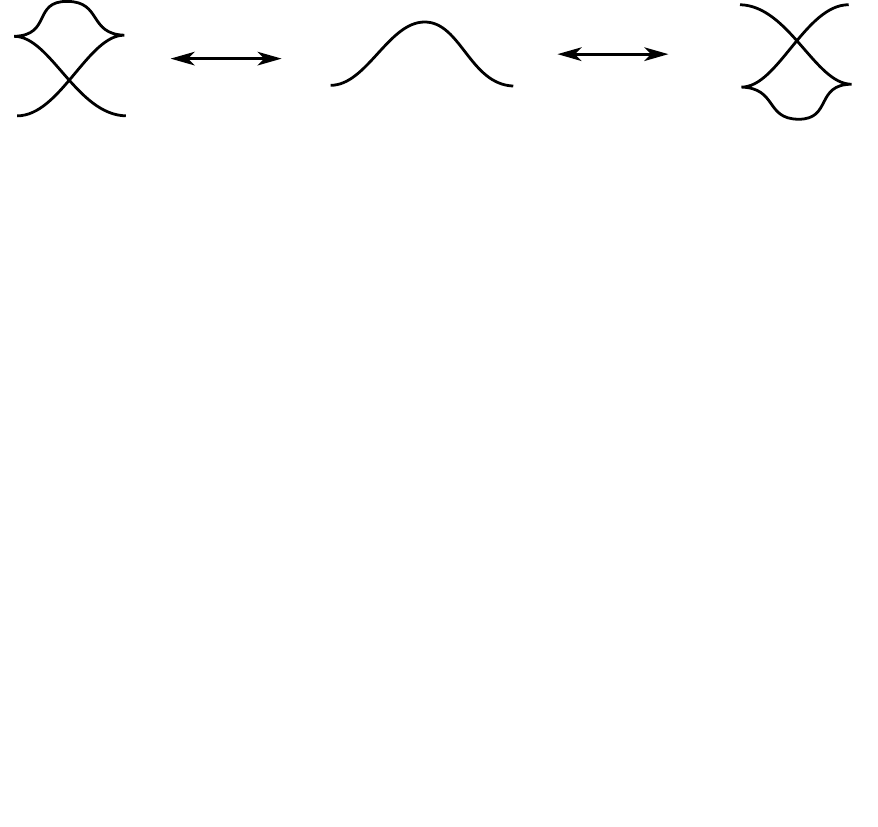
  \caption{Front projections (i.e., projections to the $(x, z)$ plane) of elementary Lagrangian cobordisms.
  Moves $1$, $1'$, $2$, $2'$, and $3$ are the Legendrian Reidemeister moves,
  $4$ is a saddle move, and $5$ is a birth.
  A Lagrangian cobordism is decomposable if it is a composition of elementary ones.}\label{fig:decomposable}
\end{figure}

\subsection{Sutured Floer homology} \label{sec:SFH}
Sutured Floer homology, defined by the second author~\cite{sutured}, assigns a finite-dimensional $\FF_2$-vector
space to a balanced sutured manifold. It is an extension of the hat version of Heegaard Floer homology,
due to Ozsv\'ath and Szab\'o~\cite{Osz8, OSz}, to 3-manifolds with boundary.
If $\xi$ is a contact structure on $M$ such that $\partial M$ is convex with dividing set~$\g$,
then Honda, Kazez, and Mati\'c~\cite{contact} assign to $\xi$ an element $\EH(\xi) \in \SFH(-M,-\g)$.

The second author~\cite[Definitions~2.3 and~2.4]{cob} introduced the notion of
sutured manifold cobordism.

\begin{defn}
Let $(M_-,\g_-)$ and $(M_+,\g_+)$ be balanced sutured manifolds.
A sutured manifold cobordism from $(M_-,\g_-)$ to $(M_+,\g_+)$
is a triple $\W = (W,Z,[\xi])$ such that
\begin{enumerate}
\item $W$ is a compact oriented 4-manifold with boundary and corners along $\partial Z$,
\item $Z$ is a compact codimension zero submanifold of $\partial W$ with $\partial W \setminus \Int(Z) = -M_- \sqcup M_+$, and
\item $[\xi]$ is the equivalence class of a positive contact structure on $Z$ such that
$\partial M_\pm$ is a convex surface with dividing set $\g_\pm$.
\end{enumerate}

We say that $\W$ is \emph{special} if $Z = I \times (\partial M_-)$
and $\xi$ is an $I$-invariant contact structure such that $\{t\} \times \partial M_-$
is a convex surface with dividing set $\{t\} \times \g_-$ for every $t \in I$ with
respect to the contact vector field $\partial / \partial t$.
\end{defn}

In~\cite{cob}, he also constructed functorial maps for sutured manifold cobordisms. In particular, every sutured
manifold cobordism $\W$ from $(M_-,\g_-)$ to $(M_+,\g_+)$ induces a linear map
\[
F_\W \colon \SFH(M_-,\g_-) \to \SFH(M_+,\g_+).
\]
This is a composition $F_{\W^s} \circ \Phi_{-\xi}$, where $-\xi$ is $\xi$ with its co-orientation reversed, and
\[
\Phi_{-\xi} \colon \SFH(M_-,\g_-) \to \SFH(M_- \cup -Z,\g_+)
\]
is the Honda--Kazez--Mati\'c gluing map~\cite{TQFT} induced by $-\xi$
and the sutured submanifold $(-M_-,-\g_-)$ of $(-M_- \cup Z,-\g_+)$.
We can now view $W$ as a cobordism $\W^s$ from $(M_- \cup -Z,\g_+)$ to $(M_+,\g_+)$
that is a product along the boundary; i.e., as a special cobordism.
It induces a map $F_{\W^s}$ by composing maps assigned to 4-dimensional handle attachments,
defined analogously to the Ozsv\'ath--Szab\'o handle maps~\cite{OSz10}.

A decorated knot is a triple $(Y, K, P)$, where $K$ is a knot in the oriented 3-manifold $Y$,
and $P$ is a pair of points on $K$ that divide it into arcs $R_-(P)$ and $R_+(P)$.
A decorated concordance $\X = (X, C, \sigma)$ from $(Y_0, K_0, P_0)$ to $(Y_1, K_1, P_1)$
consists of a cobordism $X$ from $Y_0$ to $Y_1$, a properly embedded annulus $C$ in $X$
with $\partial C = -K_0 \cup K_1$, and a pair of arcs $\sigma \subset C$ such that
one arc connects $R_+(P_0)$ with $R_+(P_1)$, and the other $R_-(P_0)$ with $R_-(P_1)$;
see \cite[Definition~4.5]{cob}. The sutured manifold cobordism $\W(\X) = (W, Z, [\xi])$
complementary to the concordance $\X$ is defined by taking $W = X \setminus N(C)$, and
$Z$ to be the unit normal circle bundle of~$C$, oriented as $\partial W$, together with the $S^1$-invariant
contact structure~$\xi$ that induces the dividing set $\sigma$ on $C$.
We define the map $F_\X$ induced by the decorated concordance $\X$ to be $F_{\W(\X)}$.

If $\W$ is a sutured manifold cobordism from $(M_-,\g_-)$ to $(M_+,\g_+)$, then
we can also view it as a cobordism $\ol{\W} = (W, Z, [-\xi])$ from $(-M_+,-\g_+)$ to $(-M_-,-\g_-)$.
In this paper, we call $\ol{\W}$ the \emph{reverse} of $\W$, as we think of our cobordisms going from
left to right. The second author \cite[Theorem~11.8]{cob} showed that $F_{\ol{\W}} = F_{\W}^*$
when $\W$ is a special cobordism, where the dual $F_{\W}^*$ is taken with respect to the natural pairing
\[
\langle \, , \, \rangle \colon \SFH(M,\g) \otimes \SFH(-M,-\g) \to \FF_2,
\]
obtained by choosing a diagram $(\S, \bolda, \boldb)$ of $(M,\g)$ and $(-\S, \bolda,\boldb)$
of $(-M, -\g)$, and defining $\langle\, \x , \y \,\rangle = \d_{\x, \y}$ for $\x$, $\y \in \T_\a \cap \T_\b$.
The sutured manifold cobordism $\W(\X)$ complementary to a concordance is equivalent to a special cobordism,
hence $F_{\ol{\X}} = F_{\X}^*$. This applies to the maps featuring in Theorems~\ref{thm:main} and~\ref{thm:L}.

\section{Parametrized concordances} \label{sec:param}

\begin{defn}
  Let $K_i$ be a knot in the oriented 3-manifold $Y_i$.
  A \emph{parametrized concordance} from $K_0$ to $K_1$ is a properly embedded annulus
  $C$ in a cobordism $X$ from $Y_0$ to $Y_1$,
  together with a normal framing and an identification $C \approx I \times S^1$ such that
  $\{i\} \times S^1 = \{i\} \times K_i$ for $i \in \{0 , 1\}$.
\end{defn}

\begin{rem}
By framing, we mean a concrete identification of a regular neighborhood $N(C)$ of $C$
with $C \times D^2$.
The normal framing of $C$ restricts to framings of $K_0$ and $K_1$.
\end{rem}

Given a parametrized concordance $(X, C)$ from $(Y_0, K_0)$ to $(Y_1, K_1)$, let
\[
\begin{split}
W_C &= X \setminus N(C), \\
M_i &= Y_i \setminus N(C) \text{ for } i \in \{0, 1\} \text{, and} \\
Z_C &= C \times S^1, \text{ oriented as } \partial W_C.
\end{split}
\]
If $\gamma$ is a dividing set on $S^1 \times S^1$ consisting of two parallel curves,
then let $\xi_\g$ be the I-invariant contact structure on $Z \approx I \times S^1 \times S^1$
such that $\{t\} \times S^1 \times S^1$ is convex with dividing set $\g_t = \{t\} \times \g$ for every $t \in I$.

\begin{ex} \label{ex:framing-dec}
  Suppose that $(X, C,\sigma)$ is a decorated concordance from $(Y_0, K_0, P_0)$ to $(Y_1, K_1, P_1)$.
  If we choose the identification $C \cong S^1 \times I$ such that
  $\sigma = (I \times \{x\}) \cup (I \times \{y\})$ for some $x$, $y \in S^1$ and
  $P_i = \{i\} \times P$ for $i \in \{0, 1\}$, then the $S^1$-invariant contact structure $\xi$ on
  $Z \approx C \times S^1$ with dividing set $\sigma$ on $C$
  is $I$-invariant as well. Hence $\xi = \xi_\g$ for $\g = P \times S^1$.
\end{ex}

As usual, we identify $\Q P^1$ with $\Q \cup \{\infty\}$ by mapping $[p : q]$ to $q/p$ if $p \neq 0$,
and to $\infty$ otherwise.
For $r = [p : q] \in \Q P^1$ and $p$, $q \in \Z$ relatively prime, let
\[
\ell_r := \left\{\, \left(e^{2\pi ipt}, e^{2\pi iqt}\right) \,\colon\, t \in I \,\right\}
\subset S^1 \times S^1,
\]
and write
\[
\g_r = \ell_r \cup -(e^{\pi i /q}, 1) \ell_r
\]
when $q \neq 0$, and $\g_r = \ell_r \cup -(1, -1) \ell_r$ otherwise.
Then $\g_r$ is a dividing set on $T^2 = S^1 \times S^1$.

Let $K$ be a framed knot in an oriented 3-manifold $Y$; i.e., we are given an identification
$N(K) \approx S^1 \times D^2$. Then we can talk about the sutured manifold
\[
Y(K,r) := \left(Y \setminus \Int(N(K)), \g_r \right)
\]
for any $r \in \Q P^1$.
For example, if $(K, P)$ is a decorated knot, $|P| = 2$, and the framing
$N(K) \approx S^1 \times D^2$ is chosen such that $P = \{\, (1,0), (-1,0)\,\}$,
then $Y(K,P) = Y(K, 0)$, as $\g_0$ consists of two meridional sutures
over~$P$.

If $(X, C)$ is a parametrized concordance from $(Y_0, K_0)$ to $(Y_1, K_1)$ and $r \in \Q P^1$,
then we denote the contact structure $\xi_{\g_r}$ by $\xi_r$, and we let
\[
F_{C,r} := F_{(W_C, Z_C, [\xi_r])} \colon \SFH(Y_0(K_0,r)) \to \SFH(Y_1(K_1,r)),
\]
where the framings of $K_0$ and $K_1$ are the restrictions of the normal framing of $C$.

\begin{defn}
  Suppose that $(X,C)$ is a parametrized concordance from $(Y_0, K_0)$ to $(Y_1, K_1)$  and $r \in \Q P^1$.
  We denote by $(X_r(C), C_r)$ the parametrized concordance where $X_r(C)$ is obtained by
  gluing $N(C)$ to $X \setminus N(C)$ along a map that sends each meridian $\{t\} \times \g_\infty$
  to $\{t\} \times \g_r$ for $t \in I$, and $C_r = C \times \{0\} \subset C \times D^2 \approx N(C)$.
  This carries a natural parametrization.
  We call $(X_r(C), C_r)$ the result of \emph{$r$-surgery along the concordance $C$}.
\end{defn}

Note that Definition~\ref{def:XL} is a special instance of the above construction.
In particular, $(X_L, L') = (X_n(L), L_n)$, where $n$ is $\tb(\Lambda_\pm)$ measured
with respect to the normal framing of $L$.

\begin{lem} \label{lem:surgery}
  Let $(X,C)$ be a parametrized concordance from $(Y_0, K_0)$ to $(Y_1, K_1)$  and $r \in \Q P^1$.
  If we choose a decoration $\sigma_r$ of the surgered concordance $(X_r(C), C_r)$ as in Example~\ref{ex:framing-dec},
  and the complementary sutured manifold cobordism
  \[
  \W(X_r(C), C_r, \sigma_r) = (W, Z, [\xi]),
  \]
  then $[\xi] = [\xi_r]$.
\end{lem}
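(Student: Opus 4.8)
The plan is to verify that the triple $\W(X_r(C), C_r, \sigma_r) = (W, Z, [\xi])$ agrees term by term with $(W_C, Z_C, [\xi_r])$, the comparison of the contact structures being the only nontrivial point. For the underlying $4$–manifold, note that $C_r = C \times \{0\}$ is the core of the solid torus $N(C) = C \times D^2$ reglued to $W_C = X \setminus N(C)$ in the construction of $X_r(C)$. Removing a smaller tubular neighborhood $N(C_r)$ of this core leaves a collar $Z_C \times I$, so $W = X_r(C) \setminus \Int(N(C_r))$ is canonically diffeomorphic to $W_C$, and the unit normal circle bundle $Z = \partial N(C_r)$ is carried by this collar onto $\partial W_C = Z_C = C \times S^1$. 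Crucially, the resulting identification $Z \cong Z_C$ is the surgery regluing map, i.e. the map that by definition sends the meridian $\g_\infty$ of $N(C)$ to $\g_r \subset Z_C$. It then remains to compare $[\xi]$ and $[\xi_r]$ on $Z \cong Z_C$.

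Next I would observe that both contact structures are invariant in two commuting directions. By definition $\xi_r$ is invariant in the concordance direction $I$, with each cross-sectional torus $T^2 = S^1 \times S^1$ convex and carrying the dividing set $\g_r$. The complementary structure $\xi$ is $S^1$–invariant in the normal (meridional) direction by construction; moreover, because the decoration is chosen as in Example~\ref{ex:framing-dec} so that $\sigma_r = (I \times \{x\}) \cup (I \times \{y\})$ is invariant in the concordance direction $I$, the $S^1$–invariant structure it determines is $I$–invariant as well. Hence both $\xi$ and $\xi_r$ are $I$–invariant contact structures on $I \times T^2$ with convex slices, and two such structures are isotopic as soon as they induce the same dividing set on one cross-sectional torus, by the standard uniqueness of vertically invariant contact structures on $T^2 \times I$. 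Thus it suffices to compute the slice dividing set of $\xi$ and to check that it equals $\g_r$.

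The slice torus is the normal circle bundle over the circle $\{t\} \times S^1 \subset C_r$, which meets $\sigma_r$ transversely in the two points $(t,x)$ and $(t,y)$. The local model for the $S^1$–invariant contact structure over such a circle produces a dividing set of two parallel curves running in the meridional direction of $N(C)$ — this is exactly the two–point case underlying the identification $Y(K,P) = Y(K,0)$ recalled above — so, up to isotopy, the slice dividing set of $\xi$ is $\g_\infty$. Transporting it to $Z_C$ by the identification established above, which by the definition of $r$–surgery sends $\g_\infty$ to $\g_r$, we conclude that the slice dividing set of $\xi$ equals $\g_r$. (Since convex tori are compared only up to isotopy of their dividing sets, the arbitrary positions $x$, $y$ of the two meridional curves are immaterial.) The one point requiring care is the correct tracking of the normal framing of $C_r$ through the $r$–surgery, so that the meridional slope of the two–point model is carried to $\g_r$ rather than to some other slope; I expect this framing bookkeeping to be the only real difficulty, the remaining steps being formal. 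With the slice dividing sets of $\xi$ and $\xi_r$ both equal to $\g_r$, the uniqueness noted above yields $[\xi] = [\xi_r]$.
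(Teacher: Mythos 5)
Your proof is correct and takes essentially the same route as the paper, whose entire proof is the sentence ``This is a straightforward consequence of the definitions''; your write-up just supplies the definition-unwinding the paper leaves implicit (the canonical identification $X_r(C) \setminus \Int(N(C_r)) \cong W_C$, the regluing map carrying the meridian to $\g_r$, and Giroux-type uniqueness of invariant contact structures with prescribed slice dividing set). The only blemish is notational and inherited from the paper's own inconsistent slope conventions: you label the two-meridian slice dividing set $\g_\infty$, consistent with the paper's definition of $r$-surgery, while simultaneously citing the identification $Y(K,P) = Y(K,0)$, which uses the opposite convention for which slope is meridional.
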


\begin{proof}
  This is a straightforward consequence of the definitions.
\end{proof}

For $(r, r') \in \Q P^1 \times \Q P^1$, let
\[
\g_{r,r'} := \g_r \times \{0\} \cup \g_{r'} \times \{1\} \subset \partial \left(T^2 \times I\right).
\]
Then  $\left(T^2 \times I, \g_{r,r'}\right)$ is a balanced sutured manifold.
If $\zeta$ is a contact structure on $(T^2 \times I, \g_{r,r'})$, then it induces a gluing map
\[
\Phi_{K,\zeta} \colon \SFH(-Y(K,r)) \to \SFH(-Y(K,r')).
\]
Let $\ol{C}$ be the reverse of the concordance $C$.

\begin{prop} \label{prop:commut}
  Let $(X, C)$ be a parametrized concordance from $(Y_0, K_0)$ to $(Y_1, K_1)$.
  If $(r,r') \in \Q P^1 \times \Q P^1$ and $\zeta$
  is a contact structure on $(T^2 \times I, \g_{r,r'})$,
  then the following diagram is commutative:
  \[
  \xymatrix{
  \SFH\left(-Y_1(K_1, r)\right) \ar[r]^-{\Phi_{K_1, \zeta}} \ar[d]^-{F_{\ol{C},r}} &
  \SFH\left(-Y_1(K_1,r')\right) \ar[d]^{F_{\ol{C},r'}} \\
  \SFH\left(-Y_0(K_0, r)\right)  \ar[r]^-{\Phi_{K_0, \zeta}} & \SFH\left(-Y_0(K_0, r')\right).}
  \]
\end{prop}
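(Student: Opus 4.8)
The plan is to realise both composites in the square as sutured cobordism maps of a single glued cobordism, and then to identify those two cobordisms. First I would observe that each Honda--Kazez--Mati\'c gluing map is itself induced by a sutured manifold cobordism whose underlying $4$-manifold is a product. Indeed, in the decomposition $F_\W = F_{\W^s}\circ\Phi_{-\xi}$, the factor $\Phi_{-\xi}$ is precisely the gluing map attached to the contact structure on $Z$; taking $\W^s$ trivial, the gluing map $\Phi_{K_i,\zeta}$ equals $F_{\W_i}$, where (with the relevant orientations) $\W_i = (M_i\times I,\ \partial M_i\times I,\ [\zeta])$ is the special cobordism that glues the collar $(T^2\times I,\g_{r,r'},\zeta)$ onto the boundary torus $\partial M_i = \partial N(K_i)$, changing the dividing set from $\g_r$ to $\g_{r'}$. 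On the other hand, $F_{\ol C,r}$ and $F_{\ol C,r'}$ are by definition the maps of the reverse cobordisms $\ol{(W_C,Z_C,[\xi_r])}$ and $\ol{(W_C,Z_C,[\xi_{r'}])}$.

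Next I would apply the composition law for sutured cobordism maps from \cite{cob}. The bottom-left path $\Phi_{K_0,\zeta}\circ F_{\ol C,r}$ is then $F$ of the cobordism obtained by stacking the collar $\W_0$ onto the $M_0$ end of $\ol{(W_C,Z_C,[\xi_r])}$, while the top-right path $F_{\ol C,r'}\circ\Phi_{K_1,\zeta}$ is $F$ of the cobordism obtained by stacking $\W_1$ onto the $M_1$ end of $\ol{(W_C,Z_C,[\xi_{r'}])}$. Both glued cobordisms have underlying $4$-manifold canonically diffeomorphic to $W_C$, since attaching a product collar $M_i\times I$ along a boundary piece does not change the diffeomorphism type, and in both the contact region is $Z_C$ with a $\zeta$-collar attached along one corner torus. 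A slope count shows that both are cobordisms from $-Y_1(K_1,r)$ to $-Y_0(K_0,r')$, with $Z$ a copy of $T^2\times I$ running from slope $r$ to slope $r'$. It therefore suffices to show that the two contact structures on this common $Z$ are equivalent in the sense used to define the maps.

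This is the heart of the matter, and it reduces to a standard collaring fact. Writing $Z_C\cong T^2\times I$ with its $I$-invariant contact structure of constant slope, the first contact structure is $\xi_r$ (constant slope $r$) followed by the $\zeta$-collar, and the second is the $\zeta$-collar followed by $\xi_{r'}$ (constant slope $r'$); the bookkeeping of dividing sets here is exactly as in Lemma~\ref{lem:surgery}. Since an $I$-invariant contact collar attached along a convex boundary whose dividing set matches the adjacent slope can be absorbed up to an isotopy fixing the boundary, each of these is equivalent to $\zeta$ alone, and hence to one another. Thus the two glued cobordisms are equivalent and induce the same map, proving commutativity. The step I expect to require the most care is this last equivalence: one must check that inserting the $\zeta$-collar commutes with the $I$-invariant product structure on $Z_C$ at the level of \emph{co-oriented} contact structures, so that the equivalence relation of \cite{cob} genuinely identifies them. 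Everything else is the composition law together with the routine identification of the slopes $r$, $r'$, and the interpolating dividing set $\g_{r,r'}$.
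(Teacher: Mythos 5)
Your proposal is correct, but it takes a genuinely different route from the paper. The paper never invokes the composition law for sutured cobordism maps: it unpacks the definition $F_{\ol{C},r} = F_{\W^s}\circ\Phi_{-\xi_r}$ directly, observes that since $Z_{\ol{C}}$ is a collar of the boundary the $4$-dimensional handles may be attached away from it, and then cites \cite[Proposition~11.5]{cob} (disjoint gluing maps commute with handle maps) to slide $\Phi_{K_0,\zeta}$ past the handle attachments; after that, both composites become ``glue a contact structure on the long collar, then attach the same handles,'' and the proof closes with the isotopy $\xi_r\cup\zeta \simeq \zeta\cup\xi_{r'}$ coming from $I$-invariance. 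You instead promote each Honda--Kazez--Mati\'c gluing map to the cobordism map of a product sutured cobordism, invoke the full functoriality (composition) theorem of \cite{cob}, and then identify the two composite cobordisms --- using exactly the same absorption isotopy $\xi_r\cup\zeta \simeq \zeta \simeq \zeta\cup\xi_{r'}$ as the crux. So the geometric heart of the two arguments coincides; the difference is the algebraic packaging. The paper's route is more economical, needing only a local commutation lemma rather than the main theorem of \cite{cob}; your route is cleaner conceptually (everything is a cobordism map and the square is literally two factorizations of one morphism), at the cost of heavier machinery and of the bookkeeping you rightly flag: since $F_\W$ is defined via $\Phi_{-\xi}$, the product cobordism realizing $\Phi_{K_i,\zeta}$ must carry the co-orientation-reversed contact structure (equivalently, one works with reverses throughout), and one must check that the identification of the composite with $\left(W_C, Z_C, [\,\cdot\,]\right)$ respects the equivalence relation on cobordisms of \cite{cob}. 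Both of these points go through, so your argument is sound.
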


\begin{proof}
  The map $F_{\ol{C},r}$ is defined by gluing $(Z_{\ol{C}}, \xi_r)$ to $Y_1(K_1, r)$, and then
  attaching 4-dimensional handles. As $Z_{\ol{C}}$ is just a collar of the boundary of the result of the gluing,
  we can assume all the handles are attached away from $Z_{\ol{C}}$. Then $\Phi_{K_0, \zeta}$
  is induced by gluing $(T^2 \times I, \zeta)$ to $Y_0(K_0, r)$.
  By \cite[Proposition~11.5]{cob}, disjoint gluing and handle maps commute.
  Hence, we also obtain $\Phi_{K_0, \zeta} \circ F_{\ol{C},r}$ if we first glue
  $\xi_r \cup \zeta$ to $Y_1(K_1, r)$, followed by the handle attachments along $Y_1(K_1, r)$.
  Since $\xi_r$ and $\xi_{r'}$ are $I$-invariant, $\xi_r \cup \zeta$ is isotopic to $\zeta \cup \xi_{r'}$.
  Gluing $\zeta$ to $Y_1(K_1)$ induces $\Phi_{K_1, \zeta}$, and then gluing $\xi_{r'}$
  to $Y_1(K_1, r')$ and attaching the handles induce $F_{\ol{C},r'}$. The result follows.
\end{proof}

\begin{prop} \label{prop:stab}
  Let $(X, C)$ be a parametrized concordance from $(Y_-, K_-)$ to $(Y_+, K_+)$, and let $n \in \Z$. Then
  \[
  \sigma_+ \circ F_{\ol{C},n} = F_{\ol{C},n+1} \circ \sigma_+.
  \]
\end{prop}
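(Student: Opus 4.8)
The plan is to deduce this identity from the commutative square established in Proposition~\ref{prop:commut}. The stabilization map $\sigma_+$ is, by its definition in the paragraph preceding the statement, the gluing map of a basic slice on $T^2 \times I$ with dividing set $\g_n$ on one boundary component and $\g_{n+1}$ on the other; and a basic slice is a particular tight contact structure $\zeta$ on $(T^2 \times I, \g_{n,n+1})$. Thus $\sigma_+$ is an instance of the gluing map $\Phi_{K,\zeta}$ of Section~\ref{sec:param}, with $r = n$ and $r' = n+1$. So I would simply apply Proposition~\ref{prop:commut} to the given parametrized concordance $(X,C)$ from $(Y_-, K_-)$ to $(Y_+, K_+)$, with $(Y_0, K_0) = (Y_-, K_-)$ and $(Y_1, K_1) = (Y_+, K_+)$, for this choice of $r$, $r'$, and $\zeta$.

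Before doing so I would pin down the conventions so that the map $\sigma_+$ agrees on the nose with $\Phi_{K_\pm,\zeta}$. Two points need checking. First, the $\Q P^1$-parametrization must match the longitude/meridian slope: writing $n = [1:n]$, the curve $\ell_n = \{\,(e^{2\pi i t}, e^{2\pi i n t}) : t \in I\,\}$ traverses the longitude once and the meridian $n$ times, so it represents $\ell + n\,m$; hence $\g_n$ (two parallel curves of slope $\ell + n\,m$) coincides with $\g_r$ for $r = n$, and likewise $\g_{n+1} = \g_{r'}$ for $r' = n+1$. Second, $\g_n$ and $\g_{n+1}$ must be adjacent in the Farey graph so that a basic slice exists: in the $(\ell, m)$-basis they are the primitive classes $(1,n)$ and $(1,n+1)$, whose determinant is $(1)(n+1) - (1)(n) = 1$, so the two classes form a basis of $H_1(T^2;\Z)$ and there is indeed a basic slice on $(T^2 \times I, \g_{n,n+1})$. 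This is the $\zeta$ for which $\sigma_+ = \Phi_{K,\zeta}$, on both the $Y_-$ side and the $Y_+$ side, since the framings of $K_\pm$ are the restrictions of the normal framing of $C$ and hence give compatible identifications of $\partial M_\pm$ with $T^2$.

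With these identifications in place, Proposition~\ref{prop:commut} applied with $r = n$ and $r' = n+1$ yields exactly the square whose horizontal maps are $\Phi_{K_+,\zeta} = \sigma_+$ on top and $\Phi_{K_-,\zeta} = \sigma_+$ on the bottom, and whose vertical maps are $F_{\ol{C},n}$ and $F_{\ol{C},n+1}$. Reading off its commutativity gives $F_{\ol{C},n+1} \circ \sigma_+ = \sigma_+ \circ F_{\ol{C},n}$, which is the claim. The only genuine work is the bookkeeping of the previous paragraph: reconciling the two slope conventions and confirming that the co-orientations of the glued basic slice agree, so that the $\sigma_+$ of the statement is literally $\Phi_{K_\pm,\zeta}$ rather than the oppositely-signed basic slice. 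Once that is settled, the proposition is immediate from Proposition~\ref{prop:commut}, and I expect the argument to be short.
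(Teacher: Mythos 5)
Your proof is correct and takes exactly the same route as the paper: the paper's proof also simply identifies $\sigma_+$ as the gluing map $\Phi_{K,\zeta_n}$ for the basic slice $\zeta_n$ on $(T^2 \times I, \g_{n,n+1})$ corresponding to positive stabilization, and then invokes Proposition~\ref{prop:commut} with $r = n$, $r' = n+1$. Your additional bookkeeping (matching the slope conventions and checking Farey adjacency) is a fine sanity check but not something the paper spells out.
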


\begin{proof}
  For $n \in \Z$, we denote by $\zeta_n$ the basic slice on
  $(T^2 \times I, \g_{n, n + 1})$ corresponding to positive stabilization.
  Given a knot $K$ in $Y$, we let
  \[
  \sigma_+ \colon \SFH(-Y(K, n)) \to \SFH(-Y(K, n+1))
  \]
  be the map induced by gluing $\zeta_n$. Then this commutes with
  parametrized concordance maps by Proposition~\ref{prop:commut}.
\end{proof}

Let $\Lambda$ be a Legendrian knot in $(Y, \xi)$.
Using the above notation, the map
\[
F \colon \SFH(-M_\Lambda, -\g_\Lambda) \to \HFKh(-Y, \Lambda)
\]
of Stipsicz and V\'ertesi~\cite[Theorem~4.2]{Vertesi} that takes
$\EH(\Lambda)$ to $\L(\Lambda)$ is defined the following way.
Choose an identification $N(\Lambda) \approx S^1 \times D^2$,
and let $n \in \Q P^1$ be $\tb(\Lambda)$ with respect to this framing.
The framing determines the decoration $P = \{(1,0), (-1,0)\}$ of $\Lambda$.
Let $\zeta$ be the basic slice on $(T^2 \times I, \g_{n, 0})$
determined by a bypass arc oriented coherently with the meridian of~$\Lambda$
(this is given by the orientation of $\Lambda$). Then
\[
F = \Phi_{\Lambda,\zeta} \colon \SFH\left(-Y(\Lambda, n)\right) \to
\SFH(-Y(\Lambda, 0)) \cong \HFKh\left(-Y, \Lambda, P\right).
\]
We denote the image of $\EH(\Lambda)$ by $\L(\Lambda, P)$.

\section{Proofs} \label{sec:proofs}

\begin{defn} \label{def:Lag}
Let $\Lambda_\pm$ be a Legendrian knot in $(Y_\pm, \xi_\pm)$, and write
\[
(M_\pm,\g_\pm) := (M_{\Lambda_\pm},\g_{\Lambda_\pm}).
\]
Given a parametrized Lagrangian concordance $L$ in the Weinstein cobordism
$(X, \omega)$ from $\Lambda_-$ to $\Lambda_+$,
we can associate to it a special cobordism $\W(L) = (W_L, Z_L, [\xi_L])$
from $(M_-, \g_-)$ to $(M_+, \g_+)$, as follows.
Let $N(L)$ be a Lagrangian tubular neighborhood of $L$.
Then $N(L)$ is identified with the unit cotangent disk bundle $DT^*L$. We denote by $\partial_h N(L)$
the part of $\partial N(L)$ identified with the unit cotangent circle bundle $ST^*L$.
This carries the canonical contact structure $\xi_\can$, which has dividing set
$\g_\pm$ on $\partial M_\pm$.
Then the parametrization of $L$ induces a contactomorphism
\[
\varphi \colon \left(M_-, \g_-, \xi_-|_{M_-}\right) \to \left(M_- \cup \partial_h N(L), \g_-,
\xi_-|_{M_-} \cup \xi_\can \right).
\]
We then set
\[
W_L = (X \setminus N(L)) \cup (I \times M_+),
\]
and $Z_L = I \times \partial M_+$,
together with the $\R$-invariant contact structure $\xi_L$ such that
$\{t\} \times \partial M_+$ is a convex surface with dividing set $\{t\} \times \g_+$ for every $t \in I$.
We identify the incoming end of $\W(L)$ with $(M_-,\g_-)$ using $\varphi$.
We call $\W(L)$ the sutured manifold cobordism \emph{complementary} to the
parametrized Lagrangian cobordism~$L$.
\end{defn}

\begin{rem}
  It is important to note that $\xi_\can$ is a positive contact structure if we orient $\partial_h N(L)$
  as the boundary of~$N(L)$, which is the opposite of the boundary orientation of $X \setminus N(L)$.
  Hence, the contact structure $\xi_\can$ is not isotopic to any $S^1$-invariant contact structure
  arising from a decoration of the $\tb(\Lambda_\pm)$-surgered concordance $L'$.
\end{rem}

A sutured manifold cobordism
$\W = (W, Z, [\xi])$ from the contact manifold with convex boundary $(M_-,\g_-,\zeta_-)$ to $(M_+,\g_+,\zeta_+)$
is Weinstein essentially if its special part from $(M_-,\zeta_-)$ to $(M_+ \cup Z, \zeta_+ \cup \xi)$
can be obtained by attaching Weinstein 1- and 2-handles; see \cite[Remark~11.23]{cob}.

\begin{proof}[Proof of Theorem~\ref{thm:main}]
Choose a parametrization of $L'$ compatible with the decoration $\sigma'$ as in Example~\ref{ex:framing-dec}.
We endow $L$ with the parametrization induced by the $\tb(\Lambda_\pm)$-framed surgery.
Let $\W(L) = (W_L, Z_L, [\xi_L])$ be the special cobordism complementary to $L$, as in Definition~\ref{def:Lag}.
By Lemma~\ref{lem:surgery}, the sutured manifold cobordisms $\W(\cL')$ and $\W(L)$ from $(M_-, \g_-)$
to $(M_+, \g_+)$ are equivalent. Hence $F_{\ol{\cL'}} = F_{\ol{\W(\cL')}} = F_{\ol{\W(L)}}$.

Using Lemma~2.2 and the preceding discussion of
Eliashberg, Ganatra, and Lazarev~\cite{flexible}, we obtain that $W_L$ can be built
by attaching Weinstein handles to
$(M_- \cup \partial_h N(L), \g_+, \xi_-|_{M_-} \cup \xi_\can)$.
Hence, by \cite[Remark~11.23]{cob}, the sutured manifold cobordism
$\W(L)$ is Weinstein in the sense of \cite[Definition~11.22]{cob}.
The result now follows from a result of the second author~\cite[Theorem~11.24]{cob}
that the reverse of a Weinstein sutured manifold cobordism induces a map that preserve the $\EH$ class.
\end{proof}

\begin{proof}[Proof of Theorem~\ref{thm:L}]
 Let $L$ be a regular Lagrangian concordance from $\Lambda_-$
 to $\Lambda_+$. Choose a parametrization of $L$ compatible with the
 decoration~$\sigma$ as in Example~\ref{ex:framing-dec},
 and let $r$ be $\tb(\Lambda_-) = \tb(\Lambda_+)$ measured with respect to this normal framing.
 If we apply Proposition~\ref{prop:commut} with $C = L$, $K_0 = \Lambda_-$,
 $K_1 = \Lambda_+$, $r' = 0$,
 and $\zeta$ the basic slice on $(T^2 \times I, \g_{r, 0})$ used in the definition of the
 Stipsicz--V\'ertesi map, then we obtain the following commutative diagram:
 \[
  \xymatrix{
  \SFH\left(-Y_+(\Lambda_+, r)\right) \ar[r]^-{\Phi_{\Lambda_+, \zeta}} \ar[d]^-{F_{\ol{L},r}} & \SFH\left(-Y_+(\Lambda_+ ,0)\right) \ar[d]^{F_{\ol{L},0}} \\
  \SFH\left(-Y_-(\Lambda_-, r)\right)  \ar[r]^-{\Phi_{\Lambda_-, \zeta}} &
  \SFH\left(-Y_-(\Lambda_-, 0)\right).}
  \]
  Note that $\SFH\left(-Y_\pm(\Lambda_\pm,0)\right) \cong
  \HFKh\left(-Y_\pm, \Lambda_\pm, P_\pm\right)$ tautologically.
  If we endow $L'$ with the parametrization induced by $r$-framed surgery along $L$
  and denote by $\cL'$ the corresponding decorated concordance, then
  $F_{\ol{L}, r} = F_{\ol{\cL'}}$ by Lemma~\ref{lem:surgery}. Furthermore, $F_{\ol{L}, 0} = F_{\ol{\cL}}$.
  Hence, we can rewrite the above diagram as
  \begin{equation}\label{eqn:commut}
  \xymatrix{
  \SFH\left(-Y_+(\Lambda_+, r)\right) \ar[r]^-{\Phi_{\Lambda_+, \zeta}} \ar[d]^-{F_{\ol{\cL'}}} & \HFKh\left(-Y_+, \Lambda_+, P_+\right) \ar[d]^{F_{\ol{\cL}}} \\
  \SFH\left(-Y_-(\Lambda_-, r)\right)  \ar[r]^-{\Phi_{\Lambda_-, \zeta}} &
  \HFKh\left(-Y_-, \Lambda_-, P_-\right).}
  \end{equation}
  Now consider $\EH(\Lambda_+) \in \SFH\left(-Y_+(\Lambda_+, r)\right)$.
  By Theorem~\ref{thm:main}, we have
  \[
  F_{\ol{\cL'}}(\EH(\Lambda_+)) = \EH(\Lambda_-).
  \]
  Since $\Phi_{\Lambda_\pm, \zeta}\left(\EH(\Lambda_\pm)\right) = \L(\Lambda_\pm, P_\pm)$,
  the commutativity of diagram~\eqref{eqn:commut} amounts to
  $F_{L, \sigma}\left(\L(\Lambda_+, P_+)\right) = \L(\Lambda_-, P_-)$.
\end{proof}

\begin{proof}[Proof of Corollary~\ref{cor:power}]
  Let $L$ be a Lagrangian concordance from $\Lambda_-$ to $\Lambda_+$.
  Choose a parametrization of $L$, and let $n$ be $\tb(\Lambda_+) = \tb(\Lambda_-)$
  with respect to the normal framing of $L$.
  Then $F_{\ol{L},n} = F_{\ol{\cL'}}$ by Lemma~\ref{lem:surgery}. The result now follows from
  Theorem~\ref{thm:L} and Proposition~\ref{prop:stab}.
\end{proof}

\bibliographystyle{amsplain}
\bibliography{topology}
\end{document}